\newcommand*\bigcdot{\mathpalette\bigcdot@{.5}}
\newcommand*\bigcdot@[2]{\mathbin{\vcenter{\hbox{\scalebox{#2}{$\m@th#1\bullet$}}}}}
\DeclareMathOperator*{\argmax}{arg\,max}
\DeclareMathOperator*{\argmin}{arg\,min}
\newcommand{\N}{\mathbb{N}}
\newcommand{\R}{\mathbb{R}}
\newcommand{\T}{\mathsf{T}} 
\newcommand{\V}{\mathcal{V}}
\newcommand{\epi}{\operatorname{epi}}
\newcommand{\dom}{\operatorname{dom}}
\DeclareMathOperator{\bd}{bd}
\DeclareMathOperator{\ver}{vert}
\DeclareMathOperator{\Int}{int}
\DeclareMathOperator{\rec}{rec}
\DeclareMathOperator{\conv}{conv}
\DeclareMathOperator{\cone}{cone}
\newcommand{\norm}[1]{\left\lVert#1\right\rVert}
\newcommand{\abs}[1]{\lvert#1\rvert}
\newtheorem{theorem}{Theorem}
\numberwithin{equation}{section}
\numberwithin{theorem}{section}
\newtheorem{lemma}[theorem]{Lemma}
\newtheorem{corollary}[theorem]{Corollary}
\newtheorem{definition}[theorem]{Definition}
\newtheorem{remark}[theorem]{Remark}
\newtheorem{assumption}[theorem]{Assumption}
\title{{Algorithms for DC Programming via Polyhedral Approximations of Convex Functions}}
\author{Fahaar Mansoor Pirani \thanks{Bilkent University, Department of Industrial Engineering, Ankara, 06800, Turkey, fahaar.pirani@bilkent.edu.tr}
	\and Firdevs Ulus \thanks{Bilkent University, Department of Industrial Engineering, Ankara, 06800, Turkey, firdevs@bilkent.edu.tr} 
}
\date{\today}
\begin{document}
\maketitle

\begin{abstract} \noindent
	There is an existing exact algorithm that solves DC programming problems if one component of the DC function is polyhedral convex \cite{lohne2017solving}. Motivated by this, first, we consider two cutting-plane algorithms for generating an $\epsilon$-polyhedral underestimator of a convex function $g$. The algorithms start with a polyhedral underestimator of $g$ and the epigraph of the current underestimator is intersected with either a single halfspace (\Cref{alg_1}) or with possibly multiple halfspaces (\Cref{alg_1_mod}) in each iteration to obtain a better approximation. We prove the correctness and finiteness of both algorithms, establish the convergence rate of \Cref{alg_1}, and show that after obtaining an $\epsilon$-polyhedral underestimator of the first component of a DC function, the algorithm from \cite{lohne2017solving} can be applied to compute an $\epsilon$-solution of the DC programming problem without further computational effort. We then propose an algorithm (\Cref{alg_3}) for solving DC programming problems by iteratively generating a (not necessarily $\epsilon$-) polyhedral underestimator of $g$. We prove that \Cref{alg_3} stops after finitely many iterations and it returns an $\epsilon$-solution to the DC programming problem. Moreover, the sequence $\{x^k\}_{k \geq 0}$ outputted by \Cref{alg_3} converges to a global minimizer of the DC problem when $\epsilon$ is set to zero. Computational results based on some test instances from the literature are provided.
	
	\medskip
	
	\noindent
	{\bf Keywords:} DC Programming $\cdot$ Global optimization $\cdot$ Polyhedral approximation $\cdot$ Algorithms
	
	\medskip
	
	\noindent
	{\bf Mathematics Subject Classification:} 90C26 $\cdot$ 90C30 $\cdot$ 52B55
\end{abstract}

\section{Introduction} \label{sect:intro}

This paper is concerned with the difference of convex (DC) programming problems. A function $f: \mathbb{R}^n \rightarrow \mathbb{R}$ is a DC function if it can be {written} as $f = g - h$, where $g:\mathbb{R}^n \rightarrow \mathbb{R}, h: \mathbb{R}^n \rightarrow \mathbb{R}$ are convex. We consider the following DC programming problem 
\begin{equation*}
	\tag{P}
	\label{dc}
	\min\limits_{x \in X} (g(x)-h(x)),
\end{equation*}
where $X\subseteq \R^n$ is a convex compact set and $g$ and $h$ are convex functions on $X$.  

DC programming has been very useful in solving non-convex problems from many fields of applied sciences including data science, communication systems, biology, finance, logistics, and supply chain management. Many solution approaches have been developed to solve these problems over the last four decades. See, for instance, the review papers \cite{le2018dc, horst1999dc} for more details on DC programming and different solution approaches. 

There are also recent DC algorithms that are mainly extensions of the classical DC Algorithm (DCA) \cite{le2023open,aragon2018accelerating,aragon2020boosted,gotoh2018dc, lu2019nonmonotone}. Most of the existing algorithms guarantee approaching a local minimum. Indeed, the problem of computing a global minimum of DC programming problems is known to be NP-hard~\cite{ferrer2015solving}. Nevertheless, there exist algorithms for finding a global minimum of a DC programming problem including the DC extended cutting angle method (DCECAM) proposed in \cite{ferrer2015solving}. DCECAM is designed by adapting the extended cutting angle method of solving convex programming problems, and it works by iteratively generating a piecewise linear underestimate of the first component of the DC function. 

There are also exact algorithms for globally solving polyhedral DC programming problems, i.e., the problems where one of the component functions $g$ or $h$ is polyhedral convex. In 2017, Löhne and Wagner \cite{lohne2017solving} proposed exact solution methods that work by either solving an associated polyhedral projection problem (if $g$ is polyhedral convex) or by additionally solving finitely many convex programs (if $h$ is convex). In \cite{ciripoi2018vector}, the results of \cite{lohne2017solving} are improved further, and in \cite{vom2020solving} solution methods based on the concave minimization techniques from \cite{ciripoi2018vector} are proposed to solve polyhedral DC programming problems. 

Motivated by the results of \cite{lohne2017solving}, in this paper, we first consider two cutting-plane algorithms (Algorithms \ref{alg_1} and \ref{alg_1_mod}) to generate polyhedral convex underestimators to convex functions over a compact set such that the gap between the function and its underestimator is bounded by a predetermined tolerance. The idea is then to transform the DC programming problem into an approximate polyhedral DC programming problem and use the existing approaches to solve the approximate problem. Note that cutting-plane algorithms have existed in the literature for more than 60 years and have been used in solving many types of optimization problems, see for instance \cite{bertsekasBook, bertsekas2011unifying, goffin2002convex, kelley1960cutting}. In this study, we utilize a vertex enumeration solver, \emph{bensolve tools} \cite{ciripoi2018vector,lohne2015bensolve,lohne2016equivalence}, to implement two variants of cutting plane methods. The algorithms iteratively generate supporting hyperplanes to the epigraph of the convex function $g$. They start with an initial polyhedral underestimator of $g$ and in each iteration, they compute the vertices of the epigraph of the current underestimator. Using the vertex that is farthest away from the epigraph of $g$ (\Cref{alg_1}) or the set of all vertices that are farther away than a predetermined distance to the epigraph of $g$ (\Cref{alg_1_mod}), they update the underestimator until the approximation error is smaller than the predetermined level. 

We prove that \Cref{alg_1} is correct and we also estimate the convergence rate of it. We also prove the correctness and finiteness of \Cref{alg_1_mod}. Note that the approach for proving the convergence rate of \Cref{alg_1} cannot be directly applied to establish the convergence rate of \Cref{alg_1_mod}, hence this is left as a future work.

Algorithms \ref{alg_1} and \ref{alg_1_mod} are naive approaches for solving the DC program \eqref{P} as they are designed for generating a polyhedral \emph{approximation} of a convex function over the whole feasible set $X$ to then apply the method from \cite{lohne2017solving}. Next, we propose another algorithm, \Cref{alg_3}, to solve \eqref{P} in a more direct sense. \Cref{alg_3} also generates polyhedral underestimators to $g$, iteratively. However, it updates the current underestimator $\bar{g}$ while searching for an $\epsilon$-solution of the polyhedral DC programming problem $\min_{x \in X} (\bar{g}(x)-h(x))$. The resulting underestimator of $g$ is not necessarily an approximation of it for a given tolerance as in Algorithms \ref{alg_1} and \ref{alg_1_mod}. Note that similar approaches, in which different types of underestimators are generated using various choices of support functions of $g$, are proposed in the literature, see for instance \cite{beliakov2005review}.

We show that \Cref{alg_3} works correctly: for a predetermined tolerance $\epsilon > 0$, it terminates after finitely many iterations and returns a global $\epsilon$-solution of \eqref{P}. Moreover, any limit point of the sequence $\{x^k\}_{k \geq 0}$ outputted by \Cref{alg_3} is shown to be a global optimal solution of the DC programming problem \eqref{P}.
	
The rest of the paper is as follows. In \Cref{prelim}, we introduce notations, recall some basic concepts from convex analysis, and introduce the problem together with some well-known definitions and results. \Cref{sect:alg1} is devoted to Algorithms \ref{alg_1} and \ref{alg_1_mod}. This includes the correctness results and convergence analysis of these algorithms. In \Cref{sect:alg_2}, we explain \Cref{alg_3}, show its correctness and finiteness, and provide convergence results. We discuss the computational performance of the proposed algorithms on several examples in \Cref{sect:ex}, and future research directions in \Cref{conc}.

\section{Preliminaries and problem definition}
\label{prelim}

For $n\in \N$, let $\mathbb{R}^n$ denote the $n$-dimensional Euclidean space and $e^i \in \mathbb{R}^n$ be the unit vector given by $e_i^i = 1$ and $e^i_j = 0$ for all $j \neq i$.

Let $A, B \subseteq \mathbb{R}^n$ be nonempty sets and $\mu \in \mathbb{R}\setminus \{0\}$. Set operations are defined as $A+B \coloneqq\{x_1+x_2 \mid x_1 \in A, x_2 \in B\}$, $\mu A \coloneqq \{\mu x \mid x \in A\}$.  The convex hull, convex conic hull, interior, 
and boundary of $A$ are denoted by $\conv A, \cone A, \Int A,$ 
and $\bd A$, respectively. A recession direction of $A$ is a vector $k \in \mathbb{R}^n\setminus\{0\}$ satisfying $A + \cone \{k\} \subseteq A$. {The \textit{recession cone} of $A$ is the set of all recession directions of $A$,} $\rec A = \{k \in \mathbb{R}^n \mid \forall a \in A, \forall \mu \geq 0 : a + \mu k \in A\}$.

For a convex set $A$, $x \in A$, and $w \in \mathbb{R}^n\setminus \{0\}$, if $w^\T x = \inf_{z \in A} w^\T z$, then the set $\{z \in \mathbb{R}^n \mid w^\T z =  w^\T x \}$ is a \textit{supporting hyperplane} of $A$ at $x$. The set $\{z \in \mathbb{R}^n \mid w^\T z \geq  w^\T x \}\supseteq A$ is a \textit{supporting halfspace} of $A$ at $x$. A nonempty closed polyhedral convex set $A$ can be represented as the intersection of a finite number of halfspaces, that is, as $A = \bigcap_{i=1}^{r}\{y \in \mathbb{R}^n \mid (w^i)^\T y \geq a^i\}$ for some $r \in \mathbb{N}, w^i \in \mathbb{R}^n \setminus \{0\}$ and $a^i \in \mathbb{R}$ (\textit{H-representation} of $A$) or by its finitely many vertices $\{y^1,\dots,y^s\} \subseteq \mathbb{R}^n$ and directions $\{d^1,\dots,d^t\} \subseteq \mathbb{R}^n$ via $A = \conv \{y^1,\dots,y^s\} + \cone \{d^1,\dots,d^t\}$ (\textit{V-representation} of $A$). Throughout the paper, the set of vertices of $A$ is denoted by $\ver A \subseteq \R^n$.

Let $\bar{\mathbb{R}}$ denote the extended real line, that is $\bar{\mathbb{R}} \coloneqq \mathbb{R} \cup \{\pm \infty\}$ and $f : \mathbb{R}^n \rightarrow \bar{\mathbb{R}}$ be a function. The \emph{effective domain} of $f$ is $\dom f \coloneqq \{x \in \mathbb{R}^n \mid f(x) < \infty\}$. The function $f$ is said to be \emph{proper} if there exists some point $x_0 \in \dom f$ such that $f(x_0) \in \mathbb{R}$. The \emph{epigraph} of $f$ is $\epi f \coloneqq \{(x,r) \in \mathbb{R}^n \times \mathbb{R} \mid f(x) \leq r\} \subseteq \R^{n+1}$. The function $f$ is said to be \emph{closed} if $\epi f$ is closed, and \textit{polyhedral convex} if $\epi f$ is a polyhedral convex set. Let $x_0 \in \dom f$. The set $\partial f(x_0) \coloneqq \{c \in \mathbb{R}^n \mid \forall x \in \mathbb{R}^n \colon f(x) \geq f(x_0) + c^\T(x-x_0)\}$ is the \textit{subdifferential} of $f$ at $x_0$. An arbitrary element of $\partial f(x_0)$ is called a \emph{subgradient} of $f$ at $x_0$ and is denoted by $c(x_0)$, throughout. If $f$ is a proper closed convex function, then it is the pointwise supremum of all of its affine minorants, that is, $f(x)= \sup \{h(x)\mid h:\R^n \to \R \text{ is affine, }h \leq f\}$ for all $x \in \dom f$.

On $\mathbb{R}^n$, let $\norm{\cdot}$ be an arbitrary norm and $\norm{\cdot}_*$ be its dual norm. The conjugate function of the norm function can be written in terms of its dual norm as follows  \begin{equation*}
		\norm{\cdot}^*(y) \eqqcolon \sup_{x \in \R^n}\{y^\T x - \norm{x}\} = \begin{cases}
			0, & \text{if} \norm{y}_* \leq 1 \\
			+\infty, & \text{if} \norm{y}_* > 1.
		\end{cases}
\end{equation*}
The closed ball centered at $x \in \mathbb{R}^n$ having radius $\epsilon >0$ is $\mathbb{B}[x,\epsilon] \coloneqq \{z \in \mathbb{R}^n \mid \norm{z-x} \leq \epsilon\}$. For every $y \in \mathbb{R}^n$, the distance from a point $y$ to a set $A \subseteq \mathbb{R}^n$ is $d(y,A) := \inf_{x \in A} \norm{y-x}$. The Hausdorff distance between $A,B  \subseteq \mathbb{R}^n$ is defined as 
\begin{equation*}
	\delta^H(A,B) = \max\{\sup\limits_{x \in A}d(x,B), \sup\limits_{y \in B}d(y,A)\}.
\end{equation*}
The following lemma will be useful throughout the paper.
\begin{lemma}\cite[Lemma 2.2]{keskin2023outer}
	\label{hausdorff}
	Let $A$ and $B$ be convex sets in $\mathbb{R}^n$ with $\rec A = \rec B$ and $B \subseteq A$. If $A$ is polyhedral convex with at least one vertex, then $\delta^H(A,B) = \max_{v \in \ver A}d(v,B)$.
\end{lemma}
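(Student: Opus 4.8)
The plan is to first use $B \subseteq A$ to reduce the Hausdorff distance to a one‑sided quantity, and then exploit the Minkowski--Weyl decomposition of $A$ together with the equality of recession cones to ``push'' an arbitrary point of $A$ back onto the vertices.

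First I would observe that, since $B \subseteq A$, we have $d(y,A) = 0$ for every $y \in B$, hence $\sup_{y \in B} d(y,A) = 0$ and therefore $\delta^H(A,B) = \sup_{x \in A} d(x,B)$. So it suffices to prove $\sup_{x \in A} d(x,B) = \max_{v \in \ver A} d(v,B)$; the right‑hand side is a maximum over a finite nonempty set because $A$ is polyhedral convex with at least one vertex, so it is well defined and attained, and the inequality ``$\geq$'' is immediate from $\ver A \subseteq A$.

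For the reverse inequality I would invoke that a polyhedral convex set with at least one vertex admits the V‑representation $A = \conv(\ver A) + \rec A$. Fix $x \in A$ and write $x = \sum_i \lambda_i v^i + r$ with $v^i \in \ver A$, $\lambda_i \geq 0$, $\sum_i \lambda_i = 1$, and $r \in \rec A$. For each $i$, pick $b^i \in B$ with $\norm{v^i - b^i} \leq d(v^i,B) + \eta$ for a fixed $\eta > 0$, and set $b := \sum_i \lambda_i b^i + r$. Then $\sum_i \lambda_i b^i \in B$ by convexity, and since $\rec A = \rec B$ we have $r \in \rec B$, so $b \in B$. By convexity of the norm and the normalization $\sum_i \lambda_i = 1$,
\[
d(x,B) \leq \norm{x - b} = \norm{\sum_i \lambda_i (v^i - b^i)} \leq \sum_i \lambda_i \norm{v^i - b^i} \leq \sum_i \lambda_i\bigl(d(v^i,B) + \eta\bigr) \leq \max_{v \in \ver A} d(v,B) + \eta .
\]
Letting $\eta \downarrow 0$ and then taking the supremum over $x \in A$ gives $\sup_{x \in A} d(x,B) \leq \max_{v \in \ver A} d(v,B)$, which together with the previous inequality yields the claim.

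The only delicate point is the $\eta$‑approximation step: if $B$ is not closed, $d(v^i,B)$ need not be attained, so I would either carry the $\eta$‑optimal points through the estimate (as above) or, equivalently, note that $d(\cdot,B) = d(\cdot,\cl B)$ and $\rec(\cl B) = \rec B = \rec A$, and run the argument verbatim with $\cl B$ in place of $B$. Everything else is a routine consequence of the linear decomposition $x = \sum_i \lambda_i v^i + r$ and the finiteness of $\ver A$.
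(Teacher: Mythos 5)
The paper does not actually prove this lemma; it imports it verbatim from \cite{keskin2023outer}, so there is no in-paper argument to compare yours against. Your proof is correct and complete: the reduction of $\delta^H(A,B)$ to the one-sided quantity $\sup_{x\in A}d(x,B)$ via $B\subseteq A$ is immediate, the V-representation $A=\conv(\ver A)+\rec A$ is exactly what the pointedness hypothesis (at least one vertex) licenses, and the construction $b=\sum_i\lambda_i b^i+r$ lands in $B$ because $B$ is convex and, under the paper's definition of the recession cone, $r\in\rec B$ together with $\sum_i\lambda_i b^i\in B$ gives $b\in B$ directly. The $\eta$-approximation correctly handles the possible non-attainment of $d(v^i,B)$ for non-closed $B$, so no closedness assumption is needed. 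This is the standard argument one would expect behind the cited result, and nothing in it conflicts with how the lemma is used elsewhere in the paper (where it is applied to the compact convex sets $A^k$ and $A$).
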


\subsection{Problem Definition}\label{Prob}

We are interested in obtaining the global minimum of a DC function, over a convex compact set $X \subseteq \mathbb{R}^n$, that is, solving the problem 
\begin{equation*}
	\tag{$P$}
	\label{P}
	\min_{x \in X} (g(x)-h(x)),
\end{equation*}
where $g:\R^n\to \R$ and $h:\R^n \to \R$ are convex functions. 
\begin{assumption}\label{assump}
We assume that $X\subseteq \R^n$ is a compact box with a nonempty interior, that is, $X=\{x\in \R^n \mid \forall i \in \{1,\ldots, n\}: \:\ell_i \leq x_i \leq u_i\}$ for some $\ell,u\in \R^n$ such that $\ell_i < u_i$ for all $i\in\{1,\ldots,n\}$. 
\end{assumption}

The existence of a solution to the problem \eqref{P} is known under \Cref{assump}. In this paper, the aim is to find a near-optimal solution in the sense of the following definition. 
	\begin{definition}\label{defn:eps}
	Let $\epsilon > 0$. $\bar{x} \in X$ is an $\epsilon$-solution to problem \eqref{P} if $$g(\bar{x})-h(\bar{x}) \leq \min_{x \in X} (g(x)-h(x)) + \epsilon.$$
\end{definition}

The following lemma and remark are simple observations and are included here since they will be useful for the design of the proposed algorithms.
\begin{lemma}
		\label{lem:hyp}
		Let $g:\mathbb{R}^n \rightarrow \mathbb{R}$ be convex and $x^* \in \mathbb{R}^n$.  Then, 
		\begin{equation*}
			H(g,x^\ast):=\{(x,t) \in \mathbb{R}^n \times \mathbb{R}\mid t- c(x^\ast)^\T x \geq g(x^*)-c(x^\ast)^\T x^*\}
		\end{equation*}
		is a supporting halfspace to $\epi g$ at $(x^*,g(x^*))$, where $c(x^\ast) \in \partial g(x^*)$ is a subgradient of $g$ at $x^*$.
\end{lemma}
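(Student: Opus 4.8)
The plan is to verify the two claims directly from the definitions of subgradient and supporting halfspace. Recall that $c(x^\ast)\in\partial g(x^\ast)$ means, by the definition of the subdifferential, that $g(x)\ge g(x^\ast)+c(x^\ast)^\T(x-x^\ast)$ for every $x\in\mathbb{R}^n$; equivalently, $g(x)-c(x^\ast)^\T x\ge g(x^\ast)-c(x^\ast)^\T x^\ast$ for all $x$.

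First I would show that $H(g,x^\ast)$ is a supporting halfspace to $\epi g$. Writing the defining inequality as $w^\T(x,t)\ge \alpha$ with $w=(-c(x^\ast),1)\in\mathbb{R}^{n+1}\setminus\{0\}$ and $\alpha=g(x^\ast)-c(x^\ast)^\T x^\ast$, I must check two things: that $\epi g\subseteq H(g,x^\ast)$, and that the associated hyperplane actually touches $\epi g$, i.e. $\inf_{(x,t)\in\epi g} w^\T(x,t)=\alpha$ with the infimum attained at a point of $\epi g$. For the inclusion, take any $(x,t)\in\epi g$, so $t\ge g(x)$; then $t-c(x^\ast)^\T x\ge g(x)-c(x^\ast)^\T x\ge g(x^\ast)-c(x^\ast)^\T x^\ast$ by the subgradient inequality, which is exactly $(x,t)\in H(g,x^\ast)$. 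For the contact/infimum part, the point $(x^\ast,g(x^\ast))$ lies in $\epi g$ and attains equality: $g(x^\ast)-c(x^\ast)^\T x^\ast=\alpha$; moreover the bound just derived shows $w^\T(x,t)\ge\alpha$ for all $(x,t)\in\epi g$, so the infimum equals $\alpha$ and is attained there. By the definitions recalled in \Cref{prelim}, $\{(x,t)\mid w^\T(x,t)=\alpha\}$ is then a supporting hyperplane of $\epi g$ at $(x^\ast,g(x^\ast))$ and $H(g,x^\ast)$ the corresponding supporting halfspace.

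There is essentially no obstacle here; the only point requiring a moment of care is checking that the hyperplane is genuinely \emph{supporting} (tangent) rather than merely containing $\epi g$ — but this is immediate from attainment at $(x^\ast,g(x^\ast))$. One should also note $w\ne 0$, which holds trivially because its last coordinate is $1$, so the halfspace is nondegenerate and the definition of supporting halfspace applies verbatim.
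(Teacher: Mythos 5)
Your proof is correct: the paper states \Cref{lem:hyp} as a ``simple observation'' and provides no proof at all, and your argument is precisely the standard verification one would supply --- the subgradient inequality gives $\epi g \subseteq H(g,x^\ast)$, and attainment of the bound at $(x^\ast, g(x^\ast)) \in \epi g$ shows the bounding hyperplane is supporting in the sense defined in \Cref{prelim}. Your added checks (that $w = (-c(x^\ast),1) \neq 0$ and that the infimum is attained) are exactly the details the paper leaves implicit, so there is nothing to correct.
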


\begin{remark}\label{rem:func_s}
Let $H(g,x^\ast)$ be as given in \Cref{lem:hyp} for a convex function $g$ and $x^*\in \R^n$. Let $s:\R^n \to \R$ be a linear function given by $s(x) \coloneqq g(x^\ast)+c(x^\ast)^\T(x-x^\ast)$. Then, $H(g,x^\ast) = \epi s$ and $s(x) \leq g(x)$ for all $x\in \R^n$.
\end{remark}

\section{Algorithms for approximating a convex function}\label{sect:alg1}
As mentioned in \Cref{sect:intro}, L\"ohne and Wagner~\cite{lohne2017solving} proposed an exact algorithm to solve the problem~\eqref{P} if at least one of $g$ or $h$ is a polyhedral convex function. The main idea of the first solution approach that we propose is to find a polyhedral approximation $\bar{g}$ of $g$ over $X$ and to use the algorithm from \cite{lohne2017solving} for finding an exact solution of the problem 
\begin{equation} \label{Pg}
	\min_{x\in X} (\bar{g}(x)-h(x)). \tag{$P_{\bar{g}}$} 
\end{equation}
Similarly, it is possible to obtain a polyhedral approximation $\bar{h}$ of $h$ and to solve 
\begin{equation} \label{Ph}
	\min_{x\in X} ({g}(x)-\bar{h}(x)). \tag{$P_{\bar{h}}$}
\end{equation}

To start with, we define a polyhedral approximation of a convex function with required properties as follows.

\begin{definition}
	\label{stop}
	Let $\epsilon >0$ and $g:\R^n \rightarrow \mathbb{R}$ be a convex function on a convex set $X \subseteq \mathbb{R}^n$. A polyhedral convex function $\bar{g}:\mathbb{R}^n \rightarrow \mathbb{R}$ is called an \emph{$\epsilon$-polyhedral underestimator of $g$ on $X$} if, for all $x \in X$, it satisfies
	\begin{equation*}
		0 \leq g(x)-\bar{g}(x) \leq \epsilon.
	\end{equation*}
\end{definition} 

Next, we show that if $\bar{g}$ (resp. $\bar{h}$) is an $\epsilon$-polyhedral underestimator of $g$ (resp. $h$) on $X$, then solving \eqref{Pg} (resp. \eqref{Ph}) yields an $\epsilon$-solution to problem \eqref{P}. 

\begin{theorem}\label{thm:alg1_opgap}
	Let $\epsilon > 0$, $g:\mathbb{R}^n \rightarrow \mathbb{R}$ and $h:\mathbb{R}^n \rightarrow \mathbb{R}$ be convex functions on a convex compact set $X \subseteq \mathbb{R}^n$. Let $\bar{g}:\mathbb{R}^n \rightarrow \mathbb{R}$ and $\bar{h}:\mathbb{R}^n \rightarrow \mathbb{R}$ be $\epsilon$-polyhedral underestimators of $g$ and $h$ over $X$. Let $x^g, x^h \in X$ be optimal solutions to problems \eqref{Pg}, \eqref{Ph}; and $z^*,z^g$ and $z^h$ be the optimal values of the problems \eqref{P}, \eqref{Pg} and \eqref{Ph}, respectively. Then, $x^g$ and $x^h$ are $\epsilon$-solutions of \eqref{P}. Moreover, $0 \leq z^*-z^g \leq \epsilon$ and $0 \leq z^h-z^* \leq \epsilon$ hold. 
\end{theorem}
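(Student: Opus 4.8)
The plan is to bound the optimal value gaps first and then derive the $\epsilon$-solution claims as immediate corollaries. The key inequalities come directly from \Cref{stop}: for every $x \in X$ we have $\bar g(x) \leq g(x) \leq \bar g(x) + \epsilon$, and similarly $\bar h(x) \leq h(x) \leq \bar h(x) + \epsilon$.

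First I would handle \eqref{Pg}. Since $\bar g \leq g$ on $X$ and $h$ is unchanged, for every $x \in X$ we get $\bar g(x) - h(x) \leq g(x) - h(x)$; taking the infimum over $X$ on both sides yields $z^g \leq z^*$, i.e. $z^* - z^g \geq 0$. For the reverse bound, use the optimal solution $x^g$ of \eqref{Pg}: then
\begin{equation*}
	z^* \leq g(x^g) - h(x^g) \leq \bigl(\bar g(x^g) + \epsilon\bigr) - h(x^g) = \bigl(\bar g(x^g) - h(x^g)\bigr) + \epsilon = z^g + \epsilon,
\end{equation*}
where the first inequality is feasibility of $x^g$ for \eqref{P}, the second is \Cref{stop}, and the last equality is optimality of $x^g$ for \eqref{Pg}. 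Combining, $0 \leq z^* - z^g \leq \epsilon$. Moreover the same chain shows $g(x^g) - h(x^g) \leq z^g + \epsilon \leq z^* + \epsilon$, so $x^g$ is an $\epsilon$-solution of \eqref{P} in the sense of \Cref{defn:eps}.

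The argument for \eqref{Ph} is symmetric with the roles reversed: from $\bar h \leq h$ we get $g(x) - \bar h(x) \geq g(x) - h(x)$ for all $x \in X$, hence $z^h \geq z^*$. Using the optimizer $x^h$ of \eqref{Ph} together with $h(x^h) \leq \bar h(x^h) + \epsilon$ gives $z^* \leq g(x^h) - h(x^h) + \epsilon \leq \ldots$; more carefully, $g(x^h) - h(x^h) \geq g(x^h) - \bar h(x^h) - \epsilon = z^h - \epsilon \geq z^* - \epsilon$, and also $g(x^h) - h(x^h) \leq z^*$ is false in general — instead one uses that $x^h \in X$ is feasible for \eqref{P}, so $g(x^h) - h(x^h) \geq z^*$, combined with the previous display to get $z^* \leq z^h \leq z^* + \epsilon$ once we also note $z^h = g(x^h) - \bar h(x^h) \leq g(x^h) - h(x^h) + \epsilon \leq$ (value at the \eqref{P}-optimizer $x^*$) $= g(x^*) - \bar h(x^*) \leq g(x^*) - h(x^*) + \epsilon = z^* + \epsilon$. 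This yields $0 \leq z^h - z^* \leq \epsilon$, and the chain also gives $g(x^h) - h(x^h) \leq z^h \leq z^* + \epsilon$ wait — one must check $g(x^h)-h(x^h)\le z^h$; since $z^h=g(x^h)-\bar h(x^h)\ge g(x^h)-h(x^h)$, indeed $g(x^h)-h(x^h)\le z^h\le z^*+\epsilon$, so $x^h$ is an $\epsilon$-solution of \eqref{P}.

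There is no real obstacle here; the only point requiring a little care is keeping track of which direction each inequality goes, since underestimating $g$ lowers the objective of \eqref{Pg} while underestimating $h$ raises the objective of \eqref{Ph}, so the two value gaps sit on opposite sides of $z^*$. Existence of the optimizers $x^g$, $x^h$, $x^*$ is guaranteed by compactness of $X$ and continuity of the (finite) convex and polyhedral convex functions involved, as already noted after \Cref{assump}.
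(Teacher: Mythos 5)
Your strategy is the right one and, for the \eqref{Pg} half, your argument is correct and in fact a little more economical than the paper's: where the paper bounds $z^*-z^g$ by rewriting both infima as negated suprema and passing to $\sup_{x\in X}(g(x)-\bar g(x))\le\epsilon$, you simply evaluate at the optimizer $x^g$ and use $g(x^g)\le\bar g(x^g)+\epsilon$ together with feasibility of $x^g$ for \eqref{P}. That chain, and the deduction that $x^g$ is an $\epsilon$-solution, are exactly what is needed.

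The \eqref{Ph} half, however, contains a step that fails as written. In the chain $z^h = g(x^h)-\bar h(x^h) \le g(x^h)-h(x^h)+\epsilon \le g(x^*)-\bar h(x^*) \le g(x^*)-h(x^*)+\epsilon = z^*+\epsilon$, the middle inequality $g(x^h)-h(x^h)+\epsilon \le g(x^*)-\bar h(x^*)$ is false in general; indeed it points the wrong way, since feasibility of $x^h$ for \eqref{P} gives $g(x^h)-h(x^h)+\epsilon\ge z^*+\epsilon$ while $g(x^*)-\bar h(x^*)\le z^*+\epsilon$, so the claimed inequality can only hold when everything is tight. Concretely, if $\bar h=h$ (a legitimate $\epsilon$-underestimator when $h$ is polyhedral) one may take $x^h=x^*$, and the inequality reads $z^*+\epsilon\le z^*$. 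The repair is immediate and uses pieces you already have: drop the spurious middle term and bound $z^h\le g(x^*)-\bar h(x^*)$ directly, because $z^h$ is the \emph{minimum} of the \eqref{Ph} objective and $x^*$ is feasible for \eqref{Ph}; then $g(x^*)-\bar h(x^*)\le g(x^*)-h(x^*)+\epsilon=z^*+\epsilon$ by \Cref{stop}. With that fixed, your concluding observation $g(x^h)-h(x^h)\le g(x^h)-\bar h(x^h)=z^h\le z^*+\epsilon$ is precisely the paper's argument that $x^h$ is an $\epsilon$-solution, and $0\le z^h-z^*\le\epsilon$ follows.
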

\begin{proof}
	Note that $z^g \leq z^*$ holds since $\bar{g}(x) \leq g(x)$ holds for all $x \in X$. Then $x^g$ is an $\epsilon$-solution of \eqref{P} since we have $g(x^g)-h(x^g) \leq \bar{g}(x^g) - h(x^g) + \epsilon = z^g + \epsilon \leq z^\ast +\epsilon$. Note that the first inequality holds as $\bar{g}$ is an $\epsilon$-polyhedral underestimator of $g$. Moreover, $	z^*-z^g \leq \epsilon$ holds since	
	\begin{align}
		\label{g}
		z^*-z^g &= \inf_{x \in X}\{g(x)-h(x)\}-\inf_{x \in X}\{\bar{g}(x)-h(x)\}\nonumber\\
		&= -\sup_{x \in X}\{h(x)-g(x)\}+\sup_{x \in 	X}\{h(x)-\bar{g}(x)\}\nonumber\\
		&\leq \sup_{x \in X}\{g(x)-\bar{g}(x)\} \leq \epsilon. \nonumber
	\end{align}
	On the other hand, $z^* \leq z^h$ holds since  $\bar{h}(x) \leq h(x)$ holds for all $x \in X$. Moreover, similar to the previous case, we have 
	\begin{align}
		z^h-z^* &= \inf_{x \in X}\{g(x)-\bar{h}(x)\}-\inf_{x \in X}\{g(x)-h(x)\}\nonumber\\
		&= -\sup_{x \in X}\{\bar{h}(x)-g(x)\}+\sup_{x \in X}\{h(x)-g(x)\}\nonumber\\
		&\leq \sup_{x \in X}\{h(x)-\bar{h}(x)\}\leq \epsilon. \nonumber
	\end{align}
Finally, $x^h$ is an $\epsilon$-solution since $g(x^h)-h(x^h) \leq g(x^h)-\bar{h}(x^h) = z^h \leq z^\ast + \epsilon$ holds.
\end{proof}

\Cref{thm:alg1_opgap} suggests that after computing an $\epsilon$-polyhedral underestimator of $g$ or $h$, one can directly use the primal or dual methods from \cite{lohne2017solving} to solve the problems \eqref{Pg} or \eqref{Ph} and find $\epsilon$-solutions to problem \eqref{P}. 

\subsection{\Cref{alg_1}}\label{base}
Now, we describe the proposed algorithm which computes an $\epsilon$-polyhedral underestimator of a convex function $g$ over a box $X \subseteq \mathbb{R}^n$, see \Cref{assump}, for any precision level $\epsilon > 0$. The main idea is to approximate the epigraph of $g$ over $X$. To that end, we define the set to be approximated as 
\begin{equation}\label{C}
	\mathcal{C}\coloneqq \epi g \cap (X \times \R) \subseteq \R^{n+1}.
	\end{equation} 
Note that as $X \subseteq \R^n$ is compact, the recession cone of $\mathcal{C}$ is \begin{equation} \label{eq:K}
	\rec \mathcal{C} =   K\coloneqq \{(0,k) \in \R^n \times \R \mid k\geq 0\} = \cone \{e^{n+1}\}.
	\end{equation}

To initialize the algorithm, we start with some $x^0 \in \Int X$, and compute an outer approximation of $\mathcal{C}$ as 
	\begin{align} \label{eq:C0}
		C^0 \coloneqq H(g,x^0) \cap (X \times \R),
	\end{align} 
where $H(g,x^0)$ is as in \Cref{lem:hyp}. Clearly, $C^0$ is a convex polyhedral set and by \Cref{lem:hyp}, $C^0 \supseteq \mathcal{C}$. By construction, the recession cone of $C^0$ is also $K$. Then, we have $C^0 = \conv\ver C^0 + K$. Moreover, using \Cref{rem:func_s}, we also know that
\begin{equation} \label{eq:g0}
		g^0(x)\coloneqq g(x^0)+c(x^0)^\T(x-x^0),
\end{equation}
where $c(x^0) \in \partial g(x^0)$, is an underestimator of $g$ such that $\epi g^0 \cap (X \times \R) = C^0$.

At iteration $k$ the algorithm computes $\max_{x \in X}(g(x)-g^{k}(x))$. Since $g^{k}$ is a polyhedral convex function, by \cite[Corollary 10]{lohne2017solving}, an optimal solution exists among the vertices of its epigraph. By the construction of the algorithm, we have  $\epi g^k \cap (X \times \R) = C^k$ for every $k$. Hence an optimal solution $\bar{x}$ is computed as $(\bar{x},\bar{y}) \in \argmax_{(x,y) \in \ver C^{k}}(g(x)-y)$. The algorithm stops if $g(\bar{x})-g^{k}(\bar{x}) \leq \epsilon$, and returns $g^k$ and $\ver C^k$. Otherwise, a supporting halfspace $H(g,\bar{x})$ to $\epi g$ at $(\bar{x},g(\bar{x}))$ is generated. The current outer approximation of $\mathcal{C}$ and the polyhedral underestimator of $g$ are updated accordingly. See \Cref{alg_1} for the details. 

	\begin{algorithm}[H]
		\caption{Algorithm to compute an $\epsilon$-polyhedral underestimator of $g$.}
		\begin{algorithmic}[1] \label{alg_1}
			\STATE \textbf{Input:} $g:\R^n \to \R, X = [\ell, u] \subseteq \mathbb{R}^n, \epsilon > 0$.
			\STATE Set $k = 0$; 
			\STATE $ x^0 :=\frac{\ell+u}{2}$, set $C^0$ and $g^0$ as in \eqref{eq:C0} and \eqref{eq:g0}, respectively, and let $\mathcal{S} = \{g^0(x)\}$;		
				\WHILE{$\TRUE$}
				\STATE Compute $\ver C^k$; 
				\STATE Let $(\bar{x}, \bar{y}) \in \argmax_{(x,y) \in \ver C^{k}}(g(x)-y)$;
				\IF{$g(\bar{x}) - \bar{y} > \epsilon$}
				\STATE $C^{k+1} \coloneqq C^{k}\cap H(g,\bar{x})$; \\
				\STATE $\bar{s}(x) = g(\bar{x})+{c(\bar{x})}^\T(x-\bar{x})$, {$\mathcal{S} \gets \mathcal{S} \cup \{\bar{s}(x)\}$}; \\
				\STATE $g^{k+1}(x) \gets \max\{g^{k}(x),\bar{s}(x)\}$;
				\STATE $k\gets k+1;$
				\ELSE
				\STATE break;
				\ENDIF
				\ENDWHILE
				\RETURN $\begin{cases} 
					g^k: \text{an $\epsilon$-polyhedral underestimator of $g$.}  \\
					{\ver C^k}: \text{vertices of } \epi g^k \cap (X \times \R).
				\end{cases}$
			\end{algorithmic}
	\end{algorithm}

The next theorem states that when \Cref{alg_1} terminates, it returns an $\epsilon$-polyhedral underestimator of $g$. 
	\begin{theorem}\label{thm:alg1correct}
		Let $g: \R^n \to \R$ be a convex function and $\epsilon > 0$. When \Cref{alg_1} stops, it returns an $\epsilon$-polyhedral underestimator of $g$ on $X$.
	\end{theorem}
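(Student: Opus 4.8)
The plan is to show two things when \Cref{alg_1} terminates at some iteration $k$: first, that $g^k$ is a polyhedral convex function, and second, that $0 \le g(x) - g^k(x) \le \epsilon$ for all $x \in X$. The polyhedrality is immediate from the construction: $g^0$ is affine, and at each step $g^{k+1} = \max\{g^k, \bar s\}$ is the pointwise maximum of finitely many affine functions (precisely the functions collected in $\mathcal{S}$), hence $\epi g^k$ is a finite intersection of halfspaces, i.e.\ polyhedral convex. The lower bound $g^k \le g$ on $X$ (indeed on all of $\R^n$) follows by induction: $g^0 \le g$ by \Cref{rem:func_s}, and if $g^k \le g$ then since each newly added $\bar s(x) = g(\bar x) + c(\bar x)^\T(x-\bar x) \le g(x)$ is an affine minorant of $g$ (again \Cref{rem:func_s}), the maximum $g^{k+1} = \max\{g^k,\bar s\}$ is still $\le g$. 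This gives $g(x) - g^k(x) \ge 0$ for all $x \in X$.

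The crux is the upper bound $g(x) - g^k(x) \le \epsilon$ for every $x \in X$, given only that the termination test $g(\bar x) - \bar y \le \epsilon$ passed for the particular vertex maximizer $(\bar x,\bar y) \in \argmax_{(x,y)\in \ver C^k}(g(x)-y)$. The key structural fact, stated in the text preceding the algorithm, is that $\epi g^k \cap (X\times\R) = C^k$ for every $k$, so that $\max_{x\in X}(g(x) - g^k(x))$ equals $\max_{(x,y)\in C^k}(g(x)-y)$. Since $g^k$ is polyhedral convex, by \cite[Corollary 10]{lohne2017solving} (cited in the text) this maximum is attained at a vertex of $C^k$; but the algorithm already selected $(\bar x,\bar y)$ to be a vertex maximizer of $g(x)-y$ over $\ver C^k$, and on $\epi g^k$ we have $g^k(\bar x) \le \bar y$, with $g^k(\bar x) = \bar y$ at vertices of $C^k$ that lie on the graph of $g^k$. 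Thus $\max_{x\in X}(g(x)-g^k(x)) = g(\bar x) - \bar y \le \epsilon$, which is exactly the termination condition. Combining with the lower bound, $g^k$ is an $\epsilon$-polyhedral underestimator of $g$ on $X$ in the sense of \Cref{stop}.

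I would write this out as: (i) a short induction establishing polyhedrality and $g^k \le g$ on $\R^n$; (ii) a lemma-style observation that $C^k = \epi g^k \cap (X\times \R)$, proved by induction using $C^{k+1} = C^k \cap H(g,\bar x) = (\epi g^k \cap (X\times\R)) \cap \epi \bar s = \epi(\max\{g^k,\bar s\}) \cap (X\times\R)$ via \Cref{rem:func_s}; (iii) the reduction of $\max_{x\in X}(g(x)-g^k(x))$ to a maximum over $\ver C^k$ using \cite[Corollary 10]{lohne2017solving}, noting the value there is $g(\bar x)-\bar y$; (iv) invoking the $\IF$ guard to conclude this value is $\le \epsilon$. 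The main obstacle — really the only subtle point — is step (iii): one must be careful that the vertex attaining $\max_{(x,y)\in C^k}(g(x)-y)$ can be taken to have $y = g^k(x)$ (otherwise decreasing $y$ would increase the objective, contradicting that it is a maximizer and that $(x,y)\in C^k$ forces $y \ge g^k(x)$), so that maximizing $g(x)-y$ over $\ver C^k$ genuinely computes $\max_{x\in X}(g(x)-g^k(x))$; this is where the identity $C^k = \epi g^k \cap (X\times\R)$ and the recession cone computation \eqref{eq:K} do the work, ensuring $C^k$ has vertices so \cite[Corollary 10]{lohne2017solving} applies.
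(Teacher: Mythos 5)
Your proposal is correct and follows essentially the same route as the paper's proof: establish by induction that $g^k$ is a polyhedral underestimator of $g$ with $C^k = \epi g^k \cap (X\times\R)$, reduce $\max_{x\in X}(g(x)-g^k(x))$ to a maximum over $\ver C^k$ via \cite[Corollary 10]{lohne2017solving} (using that vertices exist and satisfy $\bar y = g^k(\bar x)$), and invoke the termination test. The paper's write-up is terser but rests on exactly the same ingredients.
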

	\begin{proof}
		By \Cref{lem:hyp}, $C^0 \supseteq \mathcal{C}$. Moreover, by construction and \cite[Corollary 18.5.3]{rockafellar1997convex}, $\ver C^k \ne \emptyset$ for all $k\geq 0$. By \Cref{lem:hyp}, $H(g,\bar{x}) \supseteq \mathcal{C}$, hence, $C^k \supseteq \mathcal{C}$ for all $k \geq 0$ through the algorithm. By \Cref{rem:func_s} and by construction of the algorithm, $g^k$ is a polyhedral underestimator  of $g$ for any $k\geq 0$. Moreover, we have ${C^k} = \epi g^k \cap (X \times \R)$, in particular, for every $(\bar{x},\bar{y}) \in \ver {C^k}$, we have $\bar{y} = g^k (\bar{x})$. On the other hand, for every $\bar{x} \in X$, $(\bar{x},g^k(\bar{x})) \in \bd {C^k}$.
		
		Assume that \Cref{alg_1} stops and returns $g^{\bar{k}}$ for some $\bar{k} \geq 1$. Then, $g^{\bar{k}}$ is an $\epsilon$-polyhedral underestimator of $g$ on $X$, since we have
		\begin{align}
			\max\limits_{x \in X}(g(x)-g^{\bar{k}}(x))  = \max_{(x,y) \in \ver C^{\bar{k}}}(g(x)-y) = g(\bar{x})-g^{\bar{k}}(\bar{x}) \leq \epsilon\nonumber,
		\end{align} 
		where the first equality is by \cite[Corollary 10]{lohne2017solving}. The second equality and the last inequality follow from line 6 and lines 7, 13 of \Cref{alg_1}, respectively.
\end{proof}

Next, we study the convergence of \Cref{alg_1}. For the main results of this section, we assume that \Cref{alg_1} is run for a closed proper convex function $g:\R^n\to \R$. Moreover, we assume that $g$ is non-polyhedral and the algorithm is run with $\epsilon = 0$. This ensures that the algorithm runs indefinitely while updating the current underestimator at each iteration. 

To establish the convergence rate of \Cref{alg_1}, we use the convergence results of a method for approximating convex compact sets from \cite{lotov2004interactive}. For a compact convex set $\mathcal{A}\subseteq \R^{n+1}$, a sequence of outer approximating polytopes $\mathcal{A}_k$, $k\geq 0$ satisfying $\mathcal{A}_0\supseteq \mathcal{A}_1 \supseteq \ldots \mathcal{A}$ is said to be generated by a \emph{cutting method} if 
\begin{enumerate}[1.]
	\item $\mathcal{A}_0 \supseteq \mathcal{A}$ is a polyhedral set which is an intersection of supporting halfspaces of $\mathcal{A}$; and
	\item $\mathcal{A}_{k+1} = \mathcal{A}_k \cap H_k$ for all $k\geq 0$, where $H_k$ is a supporting halfspace of $\mathcal{A}$. 
\end{enumerate} 

The following definition and theorem from \cite{lotov2004interactive} will be used to estimate the convergence rate of \Cref{alg_1}.
	\begin{definition}\cite[Definition 8.3]{lotov2004interactive}
		Let $\mathcal{A}\subseteq \R^{n+1}$ be a compact convex set and $\mathcal{A}_k$, $k\geq 0$ be generated by a cutting method. $(\mathcal{A}_k)_{k\geq 0}$ is called an $H(\gamma,\mathcal{A})$-sequence of cutting if there exists a constant $\gamma > 0$ such that for any $k \geq 0$ it holds that
		\begin{equation*}
			\delta^H(\mathcal{A}_k,\mathcal{A}_{k+1}) \geq \gamma\delta^H(\mathcal{A}_k,\mathcal{A}).
		\end{equation*}
	\end{definition}
	\begin{theorem}\cite[Theorems 8.5, 8.6]{lotov2004interactive}
		\label{thm:conv_rate_lotov}
		Let $\gamma >0$, $\mathcal{A}\subseteq \R^{n+1}$ be a convex compact set and $(\mathcal{A}_k)_{k \geq 0}$ be an $H(\gamma,\mathcal{A})$-sequence of cutting. Then for any $0 < \epsilon < 1,$ there exists $N \in \mathbb{N}$ such that for $k \geq N$ it holds that
		\begin{equation*}
			\delta^H(\mathcal{A}_k,\mathcal{A}) \leq (1+\epsilon)\lambda(\gamma,\mathcal{{A}})k^{-\frac{1}{n}},
		\end{equation*}
		where $\lambda(\gamma,\mathcal{{A}}) >0$ is a parameter that depends on the topological properties of $\mathcal{A}$ together with $\gamma$. In particular, $\lim_{k \rightarrow \infty}\delta^H({\mathcal{A}_k,\mathcal{A}}) = 0$ holds.
	\end{theorem}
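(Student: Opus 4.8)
The plan is to reconstruct the argument behind \cite[Theorems 8.5, 8.6]{lotov2004interactive} as a packing estimate on the $n$-dimensional boundary of $\mathcal{A}$, using the \emph{depth} of each cut as the driving quantity and bounding how many ``deep'' cuts can occur. Write $\rho_k \coloneqq \delta^H(\mathcal{A}_k,\mathcal{A})$; since $\mathcal{A}\subseteq\mathcal{A}_{k+1}\subseteq\mathcal{A}_k$, the sequence $(\rho_k)_{k\geq 0}$ is non-increasing and hence convergent, so once the polynomial bound is established the limit statement $\rho_k\to 0$ is immediate. For each $k$ let $H_k=\{z\mid \langle w_k,z\rangle\geq b_k\}$, with $\norm{w_k}=1$, be the supporting halfspace of $\mathcal{A}$ used at step $k$, and let $v_k\in\mathcal{A}_k$ be a vertex realizing the maximal violation $\beta_k\coloneqq b_k-\langle w_k,v_k\rangle=\max_{z\in\mathcal{A}_k}(b_k-\langle w_k,z\rangle)$. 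Because $\mathcal{A}$ has nonempty interior, I would fix an inscribed ball $\mathbb{B}[x_0,r_0]\subseteq\mathcal{A}$ and set $D\coloneqq\operatorname{diam}\mathcal{A}_0$.

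First I would establish three elementary facts. \textbf{(i) Separation:} for $i<j$ one has $v_j\in\mathcal{A}_j\subseteq\mathcal{A}_{i+1}\subseteq H_i$, hence $\langle w_i,v_j\rangle\geq b_i$ and therefore $\norm{v_i-v_j}\geq\langle w_i,v_j-v_i\rangle\geq\beta_i$. \textbf{(ii) Scale equivalence:} since $\mathcal{A}\subseteq H_k$ we get $\beta_k\leq d(v_k,\mathcal{A})\leq\rho_k$, while pushing any violating point of $\mathcal{A}_k$ toward $x_0$ and using $\mathbb{B}[x_0,r_0]\subseteq\mathcal{A}_{k+1}$ shows the crossing point lies in $\mathcal{A}_{k+1}$ at distance at most $(D/r_0)\beta_k$, so $\delta^H(\mathcal{A}_k,\mathcal{A}_{k+1})\leq(D/r_0)\beta_k$; combined with the $H(\gamma,\mathcal{A})$ hypothesis $\delta^H(\mathcal{A}_k,\mathcal{A}_{k+1})\geq\gamma\rho_k$, this yields $\gamma'\rho_k\leq\beta_k\leq\rho_k$ with $\gamma'\coloneqq\gamma r_0/D$. \textbf{(iii) Localization:} $v_k$ lies outside $\mathcal{A}$ at distance at most $\rho_k\leq\beta_k/\gamma'$, so each $v_k$ sits in the outer tube of radius $\beta_k/\gamma'$ around the $n$-dimensional set $\bd\mathcal{A}$.

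The core is then a dyadic count. Fixing a scale $h>0$ and considering the indices $k$ with $\beta_k\in[h,2h)$, fact (i) makes the corresponding $v_k$ pairwise at distance at least $h$, so the balls $\mathbb{B}[v_k,h/2]$ are pairwise disjoint, while by (iii) each such ball lies in the tube of radius $h/2+2h/\gamma'$ around $\bd\mathcal{A}$. Comparing the per-index ball volume of order $(h/2)^{n+1}$ with the Steiner/tube volume of a radius-$O(h/\gamma')$ neighborhood of the $n$-dimensional set $\bd\mathcal{A}$, which for small $h$ is $(1+o(1))$ times a constant multiple of $h/\gamma'$, bounds the batch by $O\!\big(1/(\gamma' h^{n})\big)$. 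Summing over the dyadic scales $h,2h,4h,\dots$ gives $\#\{k\mid\beta_k\geq h\}\leq C(\mathcal{A})\,\gamma'^{-1}h^{-n}$. Finally, since $(\rho_k)$ is non-increasing and $\rho_k\leq\beta_k/\gamma'$, whenever $k$ exceeds this count some earlier index $j<k$ satisfies $\beta_j<h$, forcing $\rho_k\leq\rho_j<h/\gamma'$; choosing $h\sim(\gamma' k)^{-1/n}$ and absorbing the constants into $\lambda(\gamma,\mathcal{A})$ produces $\rho_k\leq(1+\epsilon)\lambda(\gamma,\mathcal{A})k^{-1/n}$ for all large $k$, and passing to the limit gives $\rho_k\to 0$.

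The main obstacle is the last geometric input: obtaining the \emph{sharp} $n$-dimensional count rather than the crude full-dimensional rate $k^{-1/(n+1)}$ that a naive disjoint-cap volume bound would give. This requires the Steiner/tube-volume asymptotics for the boundary of a general, possibly non-smooth, convex body, which are valid only in the small-$h$ regime; this is precisely the origin of both the threshold $N$ and the factor $(1+\epsilon)$ in the statement, and it is where the topological constant $\lambda(\gamma,\mathcal{A})$—depending on $\operatorname{area}(\bd\mathcal{A})$, the radii $r_0,D$, and $\gamma$—is pinned down. A secondary technical point is the scale-equivalence estimate (ii), whose constant degrades with the inner and outer radii of $\mathcal{A}$; I would dispatch it once at the outset using the fixed inscribed ball $\mathbb{B}[x_0,r_0]$.
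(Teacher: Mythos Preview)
The paper does not contain a proof of this statement. Theorem~\ref{thm:conv_rate_lotov} is quoted verbatim from \cite[Theorems~8.5, 8.6]{lotov2004interactive} and is used as a black box: the authors invoke it, together with their Theorem~\ref{thm:Hgamma}, to obtain Corollary~\ref{cor:convergence}, but they never reproduce or sketch the argument from \cite{lotov2004interactive}. So there is no ``paper's own proof'' to compare your attempt against.

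That said, your reconstruction is a coherent outline of the packing argument one expects behind such a result, and the three ingredients you isolate---separation of successive cut points by the cut depth, equivalence of $\beta_k$ and $\rho_k$ up to a constant depending on inner/outer radii, and localization of the $v_k$ near $\bd\mathcal{A}$---are the right ones. The dyadic count and the tube-volume step correctly explain why the exponent is $-1/n$ rather than the cruder $-1/(n+1)$, and your identification of the Steiner/tube asymptotics as the source of both the threshold $N$ and the $(1+\epsilon)$ factor is accurate. Two small caveats: your argument tacitly assumes $\mathcal{A}$ has nonempty interior (needed for the inscribed ball $\mathbb{B}[x_0,r_0]$), which is not stated in the theorem; and in step~(ii) the inequality $\delta^H(\mathcal{A}_k,\mathcal{A}_{k+1})\leq (D/r_0)\beta_k$ via radial projection deserves a line of justification, since the projected point must be shown to lie in $\mathcal{A}_{k+1}=\mathcal{A}_k\cap H_k$ and not merely in $H_k$. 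Neither is a serious gap, but both should be acknowledged if you write this out in full.
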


For the convergence rate of \Cref{alg_1}, we work with a convex compact subset $A$ of $\R^{n+1}$ which satisfies $A+K = \mathcal{C}$, where $K$ is the upward cone given as in \eqref{eq:K}. To this end, consider the halfspace given by 
\begin{equation}\label{eq:S}
	S:=\{(x^\T,t)^\T \in \mathbb{R}^{n}\times \R \mid t \leq b\},
\end{equation}
where $b \coloneqq \sup_{x \in X}g(x)\in \R$. It is not difficult to show that the set 
\begin{equation}\label{eq:A}
	{A:= \epi g \cap (X \times \mathbb{R}) \cap S = \mathcal{C} \cap S}
\end{equation} 
is a convex compact set satisfying $A+K = \mathcal{C}$. 

We also define the following sets
	\begin{equation}\label{eq:Ak1}
		{A^k:= \epi g^k \cap (X \times \mathbb{R}) \cap S = {C}^k \cap S},
	\end{equation}
	where $g^k$ for $k\geq 0$ are as in \Cref{alg_1}. 
	Similar to $A$, these are convex compact sets satisfying $A^{(\cdot)} + K = \epi g^{(\cdot)} \cap (X \times \mathbb{R})$. Moreover, $A \subseteq A^{k+1} \subseteq A^k$ holds for all $k\geq 0$.
	\begin{remark}\label{rem:vertAk1}
		A simple but important observation regarding the sets $A^{k}$ is that $\ver A^{k} = \ver C^k \cup (\ver X \times \{b\})$ for all $k \geq 0$. Moreover, $\ver X \times \{b\}\subseteq A$. This implies that for any $k\geq 0$, we have $$\max_{(x,y) \in \ver C^{k}}(g(x)-y)= \max_{(x,y) \in \ver A^{k}}(g(x)-y).$$ If the maximum is positive, then the arguments of the maxima are equal as well.
	\end{remark}

\begin{lemma}\label{lem:haus_A_Ak}
	Let $k \geq 0$, $(\bar{x}, \bar{y}) \in \argmax_{(x,y) \in \ver C^{k}}(g(x)-y)$ and $A, A^k$ be as given in \eqref{eq:A},\eqref{eq:Ak1}, respectively. Then, $\delta^H(A^k,A) \leq g(\bar{x})-\bar{y}$.
\end{lemma}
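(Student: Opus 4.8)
The plan is to invoke Lemma~\ref{hausdorff} (the Hausdorff-distance formula for nested convex sets with equal recession cones) applied to the pair $A^k \supseteq A$, and then bound the resulting maximum-over-vertices quantity by $g(\bar{x}) - \bar{y}$. First I would check the hypotheses of Lemma~\ref{hausdorff}: we need $A \subseteq A^k$, $\rec A = \rec A^k$, and $A^k$ polyhedral convex with at least one vertex. The inclusion $A \subseteq A^k$ was already noted after \eqref{eq:Ak1}. Both sets are intersections of $\epi g$ (or its polyhedral outer approximation $\epi g^k$) with $X \times \R$ and the halfspace $S$; since $X$ is compact and $S$ caps from above, both $A$ and $A^k$ are compact, so $\rec A = \rec A^k = \{0\}$, and $A^k$, being a bounded polyhedron, has at least one vertex. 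Hence Lemma~\ref{hausdorff} gives
\begin{equation*}
	\delta^H(A^k, A) = \max_{v \in \ver A^k} d(v, A).
\end{equation*}

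Next I would bound $d(v,A)$ for each vertex $v = (x,y) \in \ver A^k$. By Remark~\ref{rem:vertAk1}, $\ver A^k = \ver C^k \cup (\ver X \times \{b\})$, and the vertices in $\ver X \times \{b\}$ already lie in $A$, so they contribute zero to the maximum. For a vertex $(x,y) \in \ver C^k$, we have $y = g^k(x) \le g(x)$, and the point $(x, g(x))$ lies in $\mathcal{C}$; moreover, since $y \ge g^k(x)$ and $g^k \le g$ and $X$ is compact, $g(x) \le b$ for $x\in X$ (because $b = \sup_{x\in X} g(x)$), so $(x,g(x)) \in \mathcal{C} \cap S = A$. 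Therefore the vertical segment from $(x,y)$ up to $(x,g(x))$ witnesses
\begin{equation*}
	d\bigl((x,y), A\bigr) \le \norm{(x,g(x)) - (x,y)} = \norm{(0, g(x)-y)} = (g(x)-y)\,\norm{e^{n+1}}.
\end{equation*}
Here I should be careful about the norm constant: the paper works with an arbitrary norm $\norm{\cdot}$ on $\R^{n+1}$, so strictly $d((x,y),A) \le (g(x)-y)\norm{e^{n+1}}$. I expect the intended reading is that $\norm{e^{n+1}} = 1$ (e.g., a $p$-norm), or that the factor is absorbed; I would either state this normalization explicitly or carry the constant through — this is the one place where the argument is not completely clean, and it is the main thing to pin down.

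Finally, combining the two displays: since $g(x) - y \le g(\bar{x}) - \bar{y}$ for every $(x,y) \in \ver C^k$ by the choice of $(\bar{x},\bar{y})$ as an argmax, and the $\ver X \times \{b\}$ vertices contribute $0 \le g(\bar{x}) - \bar{y}$ (the maximand is nonnegative, taking the vertex coming from $x^0$ or any vertex shows $g(\bar{x})-\bar{y}\ge 0$), we get
\begin{equation*}
	\delta^H(A^k, A) = \max_{v \in \ver A^k} d(v, A) \le \max_{(x,y)\in \ver C^k} (g(x)-y) = g(\bar{x}) - \bar{y},
\end{equation*}
which is the claim. The only genuine obstacle is the norm-constant bookkeeping in the vertical-segment estimate; everything else is a direct application of Lemma~\ref{hausdorff} together with Remark~\ref{rem:vertAk1}.
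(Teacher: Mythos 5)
Your proof is correct and follows essentially the same route as the paper: apply Lemma~\ref{hausdorff} to the nested pair $A \subseteq A^k$, bound the distance from each vertex $(x,y)$ to $A$ by the vertical distance to $(x,g(x)) \in A$, and finish with the argmax property via Remark~\ref{rem:vertAk1}. Your two points of extra care --- treating the $\ver X \times \{b\}$ vertices separately (they lie in $A$ and contribute zero) and flagging the $\norm{e^{n+1}}$ factor for an arbitrary norm --- are legitimate refinements of the paper's argument, which handles the former only implicitly and silently assumes $\norm{e^{n+1}}=1$.
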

\begin{proof}
	The statement holds trivially if $g(\bar{x})-\bar{y}=0$ since it implies that $A^k=A$. On the other hand, from \Cref{hausdorff} and \Cref{rem:vertAk1}, we obtain 
	\begin{align}
		\delta^H(A^k,A) 
		&= \max_{(x,y) \in \ver A^k} \inf_{(x^a,y^a)\in A} \norm{(x^a,y^a)-(x,y)} \notag \\
		& \leq \max_{(x,y) \in \ver A^k} \norm{(x,g(x))-(x,y)} \notag \\
		& = \max_{(x,y) \in \ver A^k} (g(x)- y)\notag \\
		& = \max_{(x,y) \in \ver C^k} (g(x)-y) = g(\bar{x})-\bar{y}.
	\end{align}
\end{proof} 
\begin{theorem}	\label{thm:Hgamma}
	Assume $g:\R^n \to \R$ is a closed proper convex function. Let $A, (A^k)_{k\geq 0}$ be as given in \eqref{eq:A}, and \eqref{eq:Ak1}, respectively. $(A^k)_{k\geq 0}$ is an $H(\gamma,A)$-sequence for some $\gamma > 0$.
\end{theorem}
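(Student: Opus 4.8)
The plan is to show that the sequence $(A^k)_{k\geq 0}$ generated by \Cref{alg_1} satisfies the defining inequality of an $H(\gamma,A)$-sequence, namely $\delta^H(A^k, A^{k+1}) \geq \gamma\, \delta^H(A^k, A)$ for some fixed $\gamma > 0$ and all $k$. First I would note that $(A^k)_{k\geq 0}$ is indeed produced by a cutting method in the sense of \cite{lotov2004interactive}: $A^0 = C^0 \cap S$ is an intersection of supporting halfspaces of $A$ (the halfspace $H(g,x^0)$ supports $\epi g$ hence $A$, the box constraints $X\times\R$ support $A$, and $S$ supports $A$ along $\ver X\times\{b\}$), and $A^{k+1} = A^k \cap H(g,\bar x)$ where $H(g,\bar x)$ is a supporting halfspace of $\epi g$, hence of $A$. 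So the framework of \Cref{thm:conv_rate_lotov} applies once we establish the $H(\gamma,A)$ property.

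The core estimate is a lower bound on $\delta^H(A^k, A^{k+1})$ in terms of the quantity $g(\bar x) - \bar y$, which \Cref{lem:haus_A_Ak} already shows is an \emph{upper} bound for $\delta^H(A^k, A)$. Recall that at iteration $k$ the algorithm cuts with the supporting halfspace $H(g,\bar x) = \epi \bar s$, where $\bar s(x) = g(\bar x) + c(\bar x)^\T(x - \bar x)$, and the vertex $(\bar x, \bar y) \in \ver C^k \subseteq \ver A^k$ is deleted because $\bar y = g^k(\bar x) < \bar s(\bar x) = g(\bar x)$ (note $\bar s$ lies below $g$ but $\bar s(\bar x) = g(\bar x)$). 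The point $(\bar x, \bar y) \in A^k$ is therefore at distance at least some multiple of $g(\bar x) - \bar y$ from $A^{k+1} = A^k \cap \epi\bar s$: indeed any point $(x',y') \in A^{k+1}$ satisfies $y' \geq \bar s(x') = g(\bar x) + c(\bar x)^\T(x' - \bar x)$, and combining this with the norm geometry (the hyperplane $\{t = \bar s(x)\}$ has a fixed normal direction $(-c(\bar x), 1)$, whose dual norm is bounded because $c(\bar x)$ ranges over subgradients of $g$ on a compact set, hence is bounded) gives $\norm{(x',y') - (\bar x, \bar y)} \geq \frac{g(\bar x) - \bar y}{\norm{(-c(\bar x),1)}_*} \geq c_0 (g(\bar x) - \bar y)$ for a constant $c_0 > 0$ independent of $k$. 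Hence $\delta^H(A^k, A^{k+1}) \geq d((\bar x,\bar y), A^{k+1}) \geq c_0(g(\bar x) - \bar y) \geq c_0\, \delta^H(A^k, A)$ by \Cref{lem:haus_A_Ak}, so $\gamma = c_0$ works.

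The step I expect to be the main obstacle is making the constant $c_0$ genuinely uniform in $k$. This requires that the subgradients $c(\bar x)$ used across all iterations are drawn from a bounded set; since every $\bar x$ lies in the compact box $X$ and $g$ is a finite convex function on $\R^n$ (hence locally Lipschitz, with subgradients bounded on compact sets — e.g. by \cite[Theorem 24.7]{rockafellar1997convex} or the boundedness of $\partial g$ over $X$), the set $\bigcup_{x\in X}\partial g(x)$ is bounded, so $\sup_{x\in X}\norm{(-c(x),1)}_* =: M < \infty$ and we may take $c_0 = 1/M$. A secondary technical point is to argue cleanly that $d((\bar x,\bar y), A^{k+1})$ is bounded below by the distance from $(\bar x,\bar y)$ to the single halfspace $\epi \bar s$ — this is immediate since $A^{k+1} \subseteq \epi\bar s$, so the distance to the smaller set is at least the distance to the larger one — and that the latter distance equals $(g(\bar x)-\bar y)/\norm{(-c(\bar x),1)}_*$ by the standard point-to-halfspace formula in a general norm. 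Everything else is bookkeeping with \Cref{hausdorff} and \Cref{rem:vertAk1}.
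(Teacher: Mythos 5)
Your proposal is correct and follows essentially the same route as the paper's proof: you bound $d((\bar{x},\bar{y}),A^{k+1})$ from below by the dual-norm distance to the cutting halfspace $H(g,\bar{x})$ via H\"older's inequality, chain this with \Cref{lem:haus_A_Ak} to get $\delta^H(A^k,A^{k+1}) \geq \gamma\,\delta^H(A^k,A)$, and obtain a $k$-uniform constant from the boundedness of $\bigcup_{x\in X}\partial g(x)$ (\cite[Theorem 24.7]{rockafellar1997convex}), exactly as the paper does. The only cosmetic difference is that you phrase the key estimate as a point-to-halfspace distance formula while the paper works directly with the normalized functional $m$, but the computation is identical.
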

\begin{proof}
	Let $k\geq 0$ be arbitrary and  $(\bar{x}, \bar{y}) \in \argmax_{(x,y) \in \ver A^{k}}(g(x)-y)$. By \Cref{rem:vertAk1}, $(\bar{x}, \bar{y}) \in \argmax_{(x,y) \in \ver C^{k}}(g(x)-y)$ and $\bar{y} = g^k (\bar{x})$. Indeed, \Cref{alg_1} considers $(\bar{x},\bar{y})$ at $k^{th}$ iteration and $A^{k+1} = A^k \cap H(g,\bar{x})$, where $H(g,\bar{x})=\{(x,t) \in \mathbb{R}^n \times \mathbb{R}\mid t- c(\bar{x})^\T x \geq g(\bar{x})-c(\bar{x})^\T \bar{x}\}$
	is a supporting halfspace to $\epi g$ at $(\bar{x},g(\bar{x})) \in A$. Here, $c(\bar{x}) \in \partial g(\bar{x})$ is a subgradient of $g$ at $\bar{x}$, see \Cref{lem:hyp}. Let $(x^\prime,y^\prime)\in A^{k+1}$ be arbitrary and $m:= \frac{(-c(\bar{x})^\T,1)^\T}{\norm{(-c(\bar{x})^\T,1)}_\ast}$. Then, $m^\T(x^\prime,y^\prime) \geq m^\T(\bar{x},g(\bar{x}))$ implies 
	\begin{align}
		m^\T((x^\prime,y^\prime)-(\bar{x},\bar{y})) &\geq 
		m^\T (0,g(\bar{x})-\bar{y})
		= \frac{g(\bar{x})-\bar{y}}{\norm{(-c(\bar{x})^\T,1)}_\ast} \geq \frac{\delta^H(A^k,A)}{\norm{(-c(\bar{x})^\T,1)}_\ast}, \notag 
	\end{align} 
	where the last inequality is by \Cref{lem:haus_A_Ak}. On the other hand, from H\"{o}lder's inequality, we have \begin{equation*}
		m^\T((x^\prime,y^\prime)-(\bar{x},\bar{y})) \leq \norm{m}_\ast \norm{(x^\prime,y^\prime)-(\bar{x},\bar{y})} = \norm{(x^\prime,y^\prime)-(\bar{x},\bar{y})}.
	\end{equation*} Then, $d((\bar{x},\bar{y}),A^{k+1}) = \inf_{(x^\prime,y^\prime)\in A^{k+1}} \norm{(x^\prime,y^\prime)-(\bar{x},\bar{y})}\geq \frac{\delta^H(A^k,A)}{\norm{(-c(\bar{x})^\T,1)}_\ast}.$ From \Cref{hausdorff}, we obtain $$\delta^H(A^k,A^{k+1}) = \max_{v \in \ver A^k}d(v,A^{k+1}) \geq d((\bar{x},\bar{y}),A^{k+1}) \geq \frac{\delta^H(A^k,A)}{\norm{(-c(\bar{x})^\T,1)}_\ast} .$$
	From \cite[Theorem 24.7]{rockafellar1997convex}, $\bigcup_{x\in X} \partial g(x)$ is a nonempty compact set. This implies for some $\gamma > 0$ that $\sup_{x\in X, c(x)\in \partial g(x)} \norm{(-c(x)^\T,1)}_\ast \leq \frac{1}{\gamma}$ holds. 
\end{proof}
\begin{corollary}
	\label{cor:convergence}
	Assume $g:\R^n \to \R$ is a closed proper convex function. Let $A, (A^k)_{k\geq 0}$ be as given in \eqref{eq:A}, and \eqref{eq:Ak1}, respectively.
	\begin{enumerate}[(a)]
		\item The approximation error for the sequence $(A^k)_{k\geq 0}$ decreases by the order $\mathcal{O}( k^{-\frac{1}{n}})$.
		\item $\lim_{k \rightarrow \infty}\delta^H(A^k,A) = 0$.
	\end{enumerate} 
\end{corollary}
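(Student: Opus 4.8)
The statement is essentially a corollary of \Cref{thm:Hgamma} together with \Cref{thm:conv_rate_lotov}. Indeed, \Cref{thm:Hgamma} already provides the hard estimate, namely a constant $\gamma>0$ with $\delta^H(A^k,A^{k+1})\ge \gamma\,\delta^H(A^k,A)$ for every $k\ge 0$. The only thing left to check before invoking \Cref{thm:conv_rate_lotov} is that $(A^k)_{k\ge 0}$ is an $H(\gamma,A)$-sequence of cutting in the sense of \cite[Definition 8.3]{lotov2004interactive}, which by definition presupposes that the sequence is generated by a cutting method. So the plan is: (i) verify the two cutting-method conditions for $(A^k)_{k\ge 0}$ with target set $A$; (ii) combine with \Cref{thm:Hgamma} to conclude $(A^k)_{k\ge0}$ is an $H(\gamma,A)$-sequence of cutting; (iii) apply \Cref{thm:conv_rate_lotov} with $\mathcal{A}=A\subseteq\R^{n+1}$ and $\mathcal{A}_k=A^k$, reading off (a) and (b).

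For the first cutting-method condition, I would note that $A^0 = C^0\cap S = H(g,x^0)\cap(X\times\R)\cap S$ is a finite intersection of halfspaces, each of which supports $A$: by \Cref{lem:hyp} the halfspace $H(g,x^0)$ supports $\epi g$, and hence $A$, at $(x^0,g(x^0))\in A$; each of the $2n$ facet-defining halfspaces of the box $X\times\R$ meets $\bd A$, since the projection of $A$ onto $\R^n$ is all of $X$ (because $g(x)\le b$ for every $x\in X$, so $(x,g(x))\in A$); and $S=\{(x,t)\mid t\le b\}$ supports $A$ at $(x^*,b)=(x^*,g(x^*))\in A$, where $x^*\in\argmax_{x\in X} g(x)$ and $b=\sup_{x\in X}g(x)$. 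For the second condition, from the definition \eqref{eq:Ak1} and line 8 of \Cref{alg_1} we get $A^{k+1}=C^{k+1}\cap S=\bigl(C^k\cap H(g,\bar x)\bigr)\cap S=A^k\cap H(g,\bar x)$, and $H(g,\bar x)$ supports $\epi g\supseteq A$ at $(\bar x,g(\bar x))$, which lies in $A$ because $\bar x\in X$ (as the first coordinate of a vertex of $C^k\subseteq X\times\R$) and $g(\bar x)\le b$. Together with the inclusions $A\subseteq A^{k+1}\subseteq A^k$ recorded after \eqref{eq:Ak1}, this establishes that $(A^k)_{k\ge0}$ is generated by a cutting method.

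Having done this, \Cref{thm:Hgamma} shows $(A^k)_{k\ge0}$ is an $H(\gamma,A)$-sequence of cutting, so \Cref{thm:conv_rate_lotov} applies to $A\subseteq\R^{n+1}$: for every $0<\epsilon<1$ there is an $N\in\N$ with $\delta^H(A^k,A)\le(1+\epsilon)\lambda(\gamma,A)\,k^{-1/n}$ for all $k\ge N$, which is exactly claim (a), and the final assertion of \Cref{thm:conv_rate_lotov} gives $\lim_{k\to\infty}\delta^H(A^k,A)=0$, which is claim (b). The main thing to be careful about is the supporting-halfspace bookkeeping of step (i) — checking that each halfspace building $A^0$ and each cut $H(g,\bar x)$ supports $A$ itself, not merely $\epi g$ or $X\times\R$ — together with the mild point that, under the standing assumption that \Cref{alg_1} with $\epsilon=0$ runs indefinitely, $g$ is non-constant on $X$, so $b>g(x^0)$ and $A$ has nonempty interior in $\R^{n+1}$, as needed for the hypotheses behind \Cref{thm:conv_rate_lotov} in \cite{lotov2004interactive}.
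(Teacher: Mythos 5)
Your proposal is correct and follows essentially the same route as the paper: the published proof simply invokes \Cref{thm:Hgamma} to assert that $(A^k)_{k\geq 0}$ is an $H(\gamma,A)$-sequence of cutting and then applies \Cref{thm:conv_rate_lotov} to read off (a) and (b). Your additional step (i), verifying explicitly that $A^0$ is an intersection of supporting halfspaces of $A$ and that each update intersects with a supporting halfspace of $A$, is detail the paper leaves implicit inside the statement of \Cref{thm:Hgamma}, so it is a welcome bit of extra care rather than a different argument.
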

\begin{proof}
	(a) By \Cref{thm:Hgamma}, $(A^k)_{k\geq 0}$ is an $H(\gamma,A)$-sequence of cutting for some $\gamma > 0$. Then by \Cref{thm:conv_rate_lotov}, for any $0 < \epsilon < 1,$ there exists $N \in \mathbb{N}$ such that for $k \geq N$ it holds that 
	\begin{equation*}
		\delta^H({A}^k,{A}) \leq (1+\epsilon)\lambda(\gamma,A)k^{-\frac{1}{n}}, \text{ where $\lambda(\gamma,A)$ is as given in \Cref{thm:conv_rate_lotov}}.
	\end{equation*} (b) follows directly from (a).
\end{proof}

\subsection{\Cref{alg_1_mod}}\label{mod}
In this section, we describe a modified version of \Cref{alg_1}. The motivation is to possibly reduce the computational time. To compute the vertices of a polyhedral set given by its H representation, we solve vertex enumeration problems, which are computationally expensive in general. In each iteration of \Cref{alg_1}, only a single halfspace is intersected with the epigraph of the current underestimator, and the vertex enumeration is applied for the updated set. Instead, at iteration $k$, \Cref{alg_1_mod} considers the set of all vertices of the current outer approximation {$C^k$}. If a vertex $(\bar{x},\bar{y})\in \R^n\times \R$ of {$C^k$} is sufficiently close to $\epi g$, it is added to set $\V$, which stores the set of sufficiently close vertices. Otherwise, a supporting halfspace to $\epi g$ at $(\bar{x},g(\bar{x}))$ is generated and stored. The current outer approximation of $\mathcal{C}$ is updated by intersecting it with all these supporting halfspaces at once. The polyhedral underestimator of $g$ is updated, accordingly. The algorithm terminates when all the vertices of $C^k$ are close to $\epi g$, see \Cref{alg_1_mod}.

\begin{algorithm}[!]
	\caption{Algorithm to compute an $\epsilon$-polyhedral underestimator of a given function.}
	\begin{algorithmic}[1] \label{alg_1_mod}
		\STATE \textbf{Input:} $g:\R^n \to \R, X = [\ell, u] \subseteq \mathbb{R}^n, \epsilon > 0$.
		\STATE Set $\mathcal{V} = \emptyset, k = 0, R = \emptyset$; 
		\STATE $ x^0 :=\frac{\ell+u}{2}$, set $C^0$ {and $g^0$} as in \eqref{eq:C0} and \eqref{eq:g0}, respectively and let $\mathcal{S} = \{g^0(x)\}$;		
			\WHILE{$R \neq \R^{n+1}$}
			\STATE Compute $\ver C^k$;
			\STATE $R = \R^{n+1}$, $j=0, \ g^{k,j}(x) \coloneqq g^k(x)$;
			\FORALL{$(\bar{x}, \bar{y}) \in \ver {C^{k}} \setminus \mathcal{V}$}
			\IF{$g(\bar{x}) - \bar{y} > \epsilon$}
			\STATE $R \gets R \cap H(g,\bar{x})$; \\
			\STATE $\bar{s}(x) = g(\bar{x})+{c(\bar{x})}^\T(x-\bar{x})$, {$\mathcal{S} \gets \mathcal{S} \cup \{\bar{s}(x)\}$}; \\
			\STATE $g^{k,j+1}(x) \gets \max\{g^{k,j}(x),\bar{s}(x)\}$, $j\gets j+1;$
			\ELSE
			\STATE $\mathcal{V} \leftarrow \mathcal{V} \cup \{\bar{x}\}$;
			\ENDIF
			\ENDFOR	
			\STATE {$C^{k+1} \coloneqq C^{k}\cap R$}, $g^{{k+1}}(x) \gets g^{k,j}(x)$, $J^k\coloneqq j$, {$k\gets k+1$};
			\ENDWHILE
			\RETURN $\begin{cases} 
				g^k: \text{an $\epsilon$-polyhedral underestimator of $g$.}  \\
				{\ver C^k}: \text{vertices of } \epi g^k \cap (X \times \R). 
			\end{cases}$ 	
		\end{algorithmic}
	\end{algorithm}

	The next theorem states that when \Cref{alg_1_mod} terminates, it returns an $\epsilon$-polyhedral underestimator of $g$. The proof is omitted as it is similar to the proof of \Cref{thm:alg1correct}.
	
	\begin{theorem}\label{thm:alg2correct}
			Let $g: \R^n \to \R$ be a convex function and $\epsilon > 0$. When \Cref{alg_1_mod} stops, it returns an $\epsilon$-polyhedral underestimator of $g$ on $X$.
	\end{theorem}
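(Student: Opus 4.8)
The plan is to follow the proof of \Cref{thm:alg1correct} closely, with two adjustments: each iteration of \Cref{alg_1_mod} intersects the current outer approximation with possibly several supporting halfspaces at once (collected in $R$), and the algorithm carries the set $\V$ of already-accepted vertices \emph{across} iterations. First I would record the invariants that transfer essentially verbatim: $C^0 \supseteq \mathcal{C}$ by \Cref{lem:hyp}; $\ver C^k \neq \emptyset$ for every $k$ by \cite[Corollary 18.5.3]{rockafellar1997convex}, since $C^k$ is line-free with recession cone $K$; and, since every halfspace $H(g,\bar x)$ added to $R$ contains $\mathcal{C}$ by \Cref{lem:hyp}, one has $R \supseteq \mathcal{C}$, hence $C^{k+1} = C^k \cap R \supseteq \mathcal{C}$ for all $k$ by induction.

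Next I would prove by induction on $k$, using \Cref{rem:func_s}, that $C^k = \epi g^k \cap (X \times \R)$ and $g^k \leq g$. Writing $\bar s_1,\dots,\bar s_{J^k}$ for the affine functions generated during iteration $k$, we have $R = \bigcap_{j=1}^{J^k} H(g,\bar x_j) = \bigcap_{j=1}^{J^k} \epi \bar s_j$ and $g^{k+1} = \max\{g^k, \bar s_1,\dots,\bar s_{J^k}\}$, so intersecting epigraphs corresponds to taking the pointwise maximum and $C^{k+1} = \epi g^{k+1} \cap (X\times\R)$; each $\bar s_j \le g$ by \Cref{rem:func_s}, so $g^{k+1}\le g$. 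In particular $g^k$ is polyhedral convex, and every vertex $(\bar x,\bar y)\in\ver C^k$ satisfies $\bar y = g^k(\bar x)$, because a point strictly above the graph of $g^k$ over $X$ is the midpoint of two distinct points of $C^k$. I would also record the monotonicity $g^0 \le g^1 \le \cdots \le g$, which is immediate from $g^{k+1} = \max\{g^k,\dots\}\ge g^k$.

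The genuinely new point, which I expect to require the most care, is the use of the stopping condition $R = \R^{n+1}$ together with the persistent set $\V$. When the while loop exits at some iteration $\bar k$, no halfspace was added to $R$ during the for-loop, so every $(\bar x,\bar y) \in \ver C^{\bar k}\setminus\V$ satisfies $g(\bar x) - \bar y \le \epsilon$. To extend this bound to \emph{all} vertices of $C^{\bar k}$, I would observe that whenever a point $x$ is inserted into $\V$ — at some iteration $k' \le \bar k$ — it is inserted precisely because at that moment $(x,g^{k'}(x))$ was a vertex and $g(x) - g^{k'}(x) \le \epsilon$; by the monotonicity $g^{k'} \le g^{\bar k}$ this yields $g(x) - g^{\bar k}(x) \le \epsilon$. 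Hence for any $(\bar x,\bar y) \in \ver C^{\bar k}$ with $\bar x \in \V$ we also have $\bar y = g^{\bar k}(\bar x)$ and $g(\bar x) - \bar y \le \epsilon$. (Whether or not such a vertex persists between iterations is irrelevant: the gap at its $x$-coordinate can only shrink.) Therefore $g(\bar x) - \bar y \le \epsilon$ holds for every $(\bar x,\bar y)\in \ver C^{\bar k}$.

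Finally I would conclude exactly as in \Cref{thm:alg1correct}: combining $g^{\bar k}\le g$ with \cite[Corollary 10]{lohne2017solving} applied to the convex function $(x,y)\mapsto g(x)-y$ over $C^{\bar k} = \epi g^{\bar k}\cap(X\times\R)$ gives
\[
0 \;\le\; \max_{x\in X}\bigl(g(x)-g^{\bar k}(x)\bigr) \;=\; \max_{(x,y)\in\ver C^{\bar k}}\bigl(g(x)-y\bigr) \;\le\; \epsilon ,
\]
so $g^{\bar k}$ is an $\epsilon$-polyhedral underestimator of $g$ on $X$.
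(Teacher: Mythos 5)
Your proposal is correct and follows exactly the route the paper intends: the paper omits this proof with the remark that it is ``similar to the proof of \Cref{thm:alg1correct}'', and your argument is precisely that adaptation. The one genuinely new point --- that vertices skipped because their $x$-parts were previously stored in $\V$ still satisfy the $\epsilon$-gap at termination, which you settle via the monotonicity $g^{k'} \le g^{\bar k}$ and the identity $\bar y = g^{\bar k}(\bar x)$ on $\ver C^{\bar k}$ --- is exactly the detail the paper glosses over, and you handle it correctly.
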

	
Next, we prove that \Cref{alg_1_mod} stops after finitely many iterations for any $\epsilon > 0$ if  $g$ is a closed proper convex function. Let $\mathcal{C},K, S, A$ and $(A^k)_{k \geq 0}$ be as in \eqref{C},  \eqref{eq:K}, \eqref{eq:S}-\eqref{eq:Ak1}. Recall that $A, A^k$ are convex compact sets in $\mathbb{R}^{n+1}$ satisfying $A^{(\cdot)}+K = \epi g^{(\cdot)} \cap (X \times \mathbb{R})$. Moreover, $A \subseteq A^{k+1} \subseteq A^k$ holds for all $k \geq 0$. 

Below, we provide two technical results, upon which the finiteness result is based. The following remark is an observation used to state the subsequent lemma.
	\begin{remark}\label{rem:gamma}
		If $g:\R^n \to \R$ is a closed proper convex function, then by \cite[Theorem 24.7]{rockafellar1997convex}, $\bigcup_{x \in X}\partial g(x)$ is a nonempty bounded closed subset. Hence, $\beta \coloneqq \sup_{x\in X, c(x)\in \partial g(x)} \norm{(-c(x)^\T,1)}_\ast$ is well defined and $\beta > 0$.  
	\end{remark}

\begin{lemma}\label{ball1}
	Assume $g:\R^n \to \R$ is a closed proper convex function. Fix $\epsilon > 0$. Let $\bar{v} = (\bar{x},\bar{g}(\bar{x})) \notin A$, where $\bar{g}$ is a polyhedral underestimator of $g$, and $H^{\epsilon}(g,\bar{x})$ be a halfspace defined by 
	\begin{equation}\label{H_eps}
		H^{\epsilon}(g,\bar{x}) \coloneqq \{s \in \mathbb{R}^{n+1} \mid m^\T s \geq m^\T \bar{a}-\frac{\epsilon}{2\beta}\},
	\end{equation} 
	where $\beta$ is as defined in \Cref{rem:gamma}, $m \coloneqq \frac{(-c(\bar{x})^\T,1)^\T}{\norm{(-c(\bar{x})^\T,1)}_*}$ and $\bar{a} = (\bar{x}, g(\bar{x}))$. If $g(\bar{x})-\bar{g}(\bar{x}) > \epsilon$, then $\mathbb{B}[\bar{v},\frac{\epsilon}{4\beta}] \cap H^{\epsilon}(g,\bar{x}) = \emptyset$.
\end{lemma}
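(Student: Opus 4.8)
The plan is a short geometric computation: I would show that every point $s$ of the ball $\mathbb{B}[\bar{v},\frac{\epsilon}{4\beta}]$ lies strictly outside $H^{\epsilon}(g,\bar{x})$, i.e. on the far side of its bounding hyperplane, by tracking the value of the linear functional $m^\T(\cdot)$.

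First I would locate $\bar{v}$ relative to $\bar{a}$. Since $\bar{v}=(\bar{x},\bar{g}(\bar{x}))$ and $\bar{a}=(\bar{x},g(\bar{x}))$ agree in their first $n$ coordinates, $\bar{a}-\bar{v}=(0,g(\bar{x})-\bar{g}(\bar{x}))$, and hence
\begin{equation*}
	m^\T(\bar{a}-\bar{v})=\frac{g(\bar{x})-\bar{g}(\bar{x})}{\norm{(-c(\bar{x})^\T,1)}_*}\geq\frac{g(\bar{x})-\bar{g}(\bar{x})}{\beta}>\frac{\epsilon}{\beta},
\end{equation*}
where the first inequality uses $\norm{(-c(\bar{x})^\T,1)}_*\leq\beta$ (applicable since $\bar{x}\in X$, cf.\ \Cref{rem:gamma}) and the second uses the hypothesis $g(\bar{x})-\bar{g}(\bar{x})>\epsilon$. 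Thus $m^\T\bar{v}<m^\T\bar{a}-\frac{\epsilon}{\beta}$.

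Next, for an arbitrary $s\in\mathbb{B}[\bar{v},\frac{\epsilon}{4\beta}]$, I would use $\norm{m}_*=1$ (immediate from the definition of $m$) together with H\"{o}lder's inequality to get $\abs{m^\T(s-\bar{v})}\leq\norm{s-\bar{v}}\leq\frac{\epsilon}{4\beta}$, so that
\begin{equation*}
	m^\T s\leq m^\T\bar{v}+\frac{\epsilon}{4\beta}<m^\T\bar{a}-\frac{\epsilon}{\beta}+\frac{\epsilon}{4\beta}=m^\T\bar{a}-\frac{3\epsilon}{4\beta}<m^\T\bar{a}-\frac{\epsilon}{2\beta}.
\end{equation*}
This contradicts membership in $H^{\epsilon}(g,\bar{x})=\{s\mid m^\T s\geq m^\T\bar{a}-\frac{\epsilon}{2\beta}\}$, so $s\notin H^{\epsilon}(g,\bar{x})$; since $s$ was arbitrary, the intersection is empty.

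I do not expect a genuine obstacle: the argument is elementary, and the only thing requiring attention is the bookkeeping of the constants $\frac{\epsilon}{\beta}$, $\frac{\epsilon}{2\beta}$, $\frac{\epsilon}{4\beta}$ and the fact that the inequalities stay strict. The one point to flag is that $\bar{x}\in X$ is precisely what makes the subgradient bound of \Cref{rem:gamma} available; in the intended application $\bar{x}$ is a vertex of $C^k\subseteq X\times\R$, and then the hypothesis $\bar{v}\notin A$ is automatic from $g(\bar{x})-\bar{g}(\bar{x})>\epsilon>0$.
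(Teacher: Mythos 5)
Your proof is correct and is essentially the same argument as the paper's: both rest on computing $m^\T(\bar{a}-\bar{v})=\frac{g(\bar{x})-\bar{g}(\bar{x})}{\norm{(-c(\bar{x})^\T,1)}_*}>\frac{\epsilon}{\beta}$ and applying H\"{o}lder's inequality with $\norm{m}_*=1$. The only difference is the direction of the contrapositive (the paper shows every point of $H^{\epsilon}(g,\bar{x})$ is at distance greater than $\frac{\epsilon}{2\beta}$ from $\bar{v}$, while you show every point of the ball violates the halfspace inequality), which is immaterial.
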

\begin{proof}
	Let $s^* \in H^{\epsilon}(g,\bar{x})$ be arbitrary. We have $$m^\T s^* \geq m^\T (\bar{v}+(\bar{a}-\bar{v}))-\frac{\epsilon}{2\beta} = m^\T\bar{v} + \frac{g(\bar{x})-\bar{g}(\bar{x})}{\norm{(-c(\bar{x})^\T,1)}_*}-\frac{\epsilon}{2\beta} \geq  m^\T\bar{v}+ \frac{g(\bar{x})-\bar{g}(\bar{x})}{\beta}-\frac{\epsilon}{2\beta}.$$ Equivalently, $m^\T(s^*-\bar{v}) \geq \frac{g(\bar{x})-\bar{g}(\bar{x})}{\beta}-\frac{\epsilon}{2\beta}.$ Using $\norm{m}_* = 1$ and $g(\bar{x})-\bar{g}(\bar{x}) > \epsilon$, we obtain $s^* \notin \mathbb{B}[\bar{v},\frac{\epsilon}{4\beta}]$ as 
	\begin{equation*}
		\norm{s^*-\bar{v}} \geq |m^\T(s^*-\bar{v})| \geq \frac{g(\bar{x})-\bar{g}(\bar{x})}{\beta}-\frac{\epsilon}{2\beta} > \frac{\epsilon}{\beta}-\frac{\epsilon}{2\beta} = \frac{\epsilon}{2\beta}.
	\end{equation*}
\end{proof}
The following lemma can be found in \cite[Lemma 2.1]{ararat2023convergence}. It is restated in terms of the terminology used here.
\begin{lemma}\label{eq1:lem21}	
	Let $\bar{v} = (\bar{x},\bar{g}(\bar{x})) \notin A$ and $H(g,\bar{x})$ and $H^{\epsilon}(g,\bar{x})$ be halfspaces defined by \Cref{lem:hyp} and \eqref{H_eps} respectively. Then $H(g,\bar{x}) + \mathbb{B}[0,\frac{\epsilon}{2\beta}] \subseteq H^{\epsilon}(g,\bar{x})$.
\end{lemma}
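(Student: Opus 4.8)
The plan is to express both halfspaces in a common normalized form with the unit normal $m$ from \eqref{H_eps}, and then verify the Minkowski-sum inclusion by a single perturbation estimate via H\"older's inequality.

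First I would rewrite $H(g,\bar{x})$. Writing a generic point as $s = (x^\T, t)^\T$ and recalling $\bar{a} = (\bar{x}^\T, g(\bar{x}))^\T$, the defining inequality of $H(g,\bar x)$ from \Cref{lem:hyp}, namely $t - c(\bar{x})^\T x \geq g(\bar{x}) - c(\bar{x})^\T \bar{x}$, is precisely $(-c(\bar{x})^\T, 1)\, s \geq (-c(\bar{x})^\T, 1)\, \bar{a}$, since $(-c(\bar{x})^\T,1)\,\bar a = g(\bar x) - c(\bar x)^\T \bar x$. Dividing both sides by the positive scalar $\norm{(-c(\bar{x})^\T, 1)}_*$ gives $H(g,\bar{x}) = \{ s \in \R^{n+1} \mid m^\T s \geq m^\T \bar{a} \}$, where $m$ is as in \eqref{H_eps}, and $\norm{m}_* = 1$ by construction. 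By definition \eqref{H_eps}, $H^{\epsilon}(g,\bar x)$ has the same normal $m$ and right-hand side $m^\T \bar a - \tfrac{\epsilon}{2\beta}$.

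Next I would take an arbitrary element of $H(g,\bar{x}) + \mathbb{B}[0,\tfrac{\epsilon}{2\beta}]$, i.e.\ a point $s + b$ with $m^\T s \geq m^\T \bar a$ and $\norm{b} \leq \tfrac{\epsilon}{2\beta}$. By H\"older's inequality, $|m^\T b| \leq \norm{m}_* \norm{b} = \norm{b} \leq \tfrac{\epsilon}{2\beta}$, so $m^\T(s+b) = m^\T s + m^\T b \geq m^\T \bar a - \tfrac{\epsilon}{2\beta}$, which is exactly the condition for $s + b \in H^{\epsilon}(g,\bar x)$. This establishes the desired inclusion.

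The argument is essentially routine; the only point requiring care is the bookkeeping of the normalization, i.e.\ checking that the right-hand side constant of $H(g,\bar x)$, once divided by $\norm{(-c(\bar{x})^\T, 1)}_*$, is exactly $m^\T \bar a$, so that $H(g,\bar x)$ and $H^{\epsilon}(g,\bar x)$ are written with the identical unit normal $m$ and differ only by the offset $\tfrac{\epsilon}{2\beta}$. Note that the hypotheses $\bar v \notin A$ and that $\bar g$ is an underestimator of $g$ are not used in proving this inclusion; they are carried over from the standing setup of \Cref{ball1} and are only relevant for how the lemma is applied subsequently.
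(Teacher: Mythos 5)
Your proof is correct. Note that the paper itself does not prove this lemma: it is imported verbatim from \cite[Lemma 2.1]{ararat2023convergence}, so there is no in-paper argument to compare against. Your verification --- normalizing $H(g,\bar{x})$ to the form $\{s \mid m^\T s \geq m^\T \bar{a}\}$ with the same unit normal $m$ as in \eqref{H_eps}, and then bounding the perturbation by $|m^\T b| \leq \norm{m}_*\norm{b} \leq \tfrac{\epsilon}{2\beta}$ via H\"older --- is exactly the natural self-contained argument, and it is consistent with how the surrounding results (\Cref{ball1}, \Cref{thm:Hgamma}) handle the same normalization and duality pairing. Your closing observation that the hypotheses $\bar{v}\notin A$ and the underestimator property of $\bar{g}$ play no role in the inclusion itself is also accurate; they matter only for how the lemma is invoked in the proof of \Cref{thm:finitealg2}.
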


\begin{theorem}\label{thm:finitealg2}
Assume $g:\R^n \to \R$ is a closed proper convex function.	For any $\epsilon > 0$, \Cref{alg_1_mod} terminates after a finite number of iterations.
\end{theorem}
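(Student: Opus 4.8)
The plan is to mimic the classical packing/compactness argument for finiteness of cutting-plane methods, using the ball-separation result of \Cref{ball1} and the monotonicity of the outer approximations $A^k$.

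\textbf{Setup of the argument.} First I would argue by contradiction: suppose \Cref{alg_1_mod} does not terminate, so it produces an infinite sequence of iterations $k=0,1,2,\ldots$, and in each iteration $k$ at least one vertex $(\bar x,\bar y)\in\ver C^k\setminus\V$ fails the test, i.e. satisfies $g(\bar x)-\bar y>\epsilon$, generating the halfspace $H(g,\bar x)$. Let me collect, over all iterations, all such ``cut points'' $\bar v=(\bar x,\bar y)$; this gives an infinite sequence $(\bar v^m)_{m\geq 1}$ of points in $\R^{n+1}$. Since every $\bar v^m$ is a vertex of some $C^k\subseteq C^0$ and $C^0\cap S = A^0$ is compact (all the cut points have $y$-coordinate equal to $g^k(\bar x)\leq g(\bar x)\leq b$, so they lie in $A^0$), the sequence $(\bar v^m)_{m\geq 1}$ lies in the compact set $A^0$ and hence has a convergent subsequence; in particular it has two terms $\bar v^{m}$, $\bar v^{m'}$ with $m<m'$ (arising at iterations $k<k'$, or possibly the same iteration) whose distance is less than $\frac{\epsilon}{4\beta}$, where $\beta$ is as in \Cref{rem:gamma}.

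\textbf{Deriving the contradiction.} The point $\bar v^m=(\bar x^m,\bar y^m)$ with $\bar y^m=g^k(\bar x^m)$ is, since $g(\bar x^m)-g^k(\bar x^m)>\epsilon$, a point of the form covered by \Cref{ball1} (with $\bar g=g^k$, which is a polyhedral underestimator of $g$, and $\bar v^m\notin A$ because its $y$-coordinate is strictly below $g(\bar x^m)$). Hence $\mathbb{B}[\bar v^m,\frac{\epsilon}{4\beta}]\cap H^\epsilon(g,\bar x^m)=\emptyset$, and by \Cref{eq1:lem21}, $H(g,\bar x^m)+\mathbb{B}[0,\frac{\epsilon}{2\beta}]\subseteq H^\epsilon(g,\bar x^m)$, so in particular $H(g,\bar x^m)$ and the open ball $\mathbb{B}(\bar v^m,\frac{\epsilon}{4\beta})$ are disjoint — actually $H(g,\bar x^m)$ stays at distance at least $\frac{\epsilon}{4\beta}$ from $\bar v^m$. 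Now the key point: at the end of iteration $k$ we set $C^{k+1}\subseteq C^k\cap H(g,\bar x^m)$, and the approximations only shrink, so for every later iteration $k'>k$ we have $C^{k'}\subseteq H(g,\bar x^m)$, hence every vertex of $C^{k'}$ — in particular the cut point $\bar v^{m'}$ — lies in $H(g,\bar x^m)$ and therefore satisfies $\norm{\bar v^{m'}-\bar v^m}\geq \frac{\epsilon}{4\beta}$. (If $m$ and $m'$ come from the same iteration $k$, I instead use that within the \textbf{for} loop the cut $H(g,\bar x^m)$ is placed into $R$ and $C^{k+1}\subseteq R\subseteq H(g,\bar x^m)$, and note that distinct vertices of the same $C^k$ are distinct; more carefully, two cut points of the same iteration need not both be separated, so it is cleaner to index so that within one iteration we still get pairwise distance bounds — see the obstacle below.) This contradicts the choice of a subsequence with two points closer than $\frac{\epsilon}{4\beta}$, so the algorithm must terminate after finitely many iterations.

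\textbf{Main obstacle.} The delicate point is handling cut points that are generated \emph{within the same iteration} $k$ of the outer \textbf{while} loop: \Cref{ball1}/\Cref{eq1:lem21} give separation of a later iterate from $\bar v^m$ only because $C^{k+1}\subseteq H(g,\bar x^m)$, which holds for vertices appearing in iterations $k'>k$, but two cut points found in the same pass over $\ver C^k\setminus\V$ both belong to $C^k$ (not yet cut). The standard fix is to observe that it suffices to derive a contradiction from the infinitude of \emph{iterations}, so I can pick exactly one cut point per iteration (say the first one, or the farthest one) to form the sequence $(\bar v^m)$; then consecutive indices correspond to strictly increasing iteration counts, and the argument above applies verbatim with $k<k'$. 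One should also double-check the boundedness claim: each cut point $\bar v=(\bar x,g^k(\bar x))$ has $\bar x\in X$ (it is a vertex of $C^k=\epi g^k\cap(X\times\R)$, so $\bar x\in X$) and $g^0(\bar x)\le g^k(\bar x)=\bar y<g(\bar x)\le b$, with $g^0$ continuous on the compact $X$, so indeed $(\bar v^m)_m$ ranges over a compact subset of $\R^{n+1}$ and Bolzano--Weierstrass applies. With these two bookkeeping points settled, finiteness follows; I expect the write-up to be short once the ``one cut per iteration'' indexing is fixed.
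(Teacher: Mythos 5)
Your proposal is correct and follows essentially the same route as the paper's proof: one violating vertex is fixed per iteration, \Cref{ball1} and \Cref{eq1:lem21} yield a uniform separation of $\frac{\epsilon}{4\beta}$ between that vertex and all later outer approximations, and compactness of $A^0$ gives the contradiction. The only cosmetic difference is the finish --- you extract a convergent subsequence and contradict the uniform separation, while the paper packs infinitely many disjoint balls of equal positive volume into the compact set $A^0+\mathbb{B}[0,\frac{\epsilon}{2\beta}]$ --- and your ``one cut point per iteration'' bookkeeping is exactly how the paper resolves the same-iteration issue you flagged.
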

\begin{proof}
	By the construction of the sets in \eqref{eq:Ak1}, the number of vertices of $A^k$ is finite for every $k \geq 0$. It is sufficient to prove that there exists a $k_{\epsilon} \geq 0$ such that for every vertex $v = (x, g^{k_{\epsilon}}(x)) \in \ver A^{k_{\epsilon}}$, we have $g(x)-g^{k_{\epsilon}}(x) \leq \epsilon$. Assume to the contrary that for every $k \geq 0$, there exists a vertex $v^k \in \ver A^k$ such that $g(x^k)-g^k(x^k) > \epsilon$. For the rest of the proof, we fix an arbitrary $v^k \in \ver A^k$ satisfying this condition. 

	Consider the compact set $A^0 + \mathbb{B}[0,\frac{\epsilon}{2\beta}]$. Define for an arbitrary $k \geq 0$, $B^k \coloneqq \{v^k\} +  \mathbb{B}[0,\frac{\epsilon}{4\beta}]$. Since $v^k \in A^k$, it holds true that
	\begin{equation}\label{BkinA0+Ball1}
		B^k \subseteq \{v^k\} + \mathbb{B}[0,\frac{\epsilon}{2\beta}] \subseteq A^k + \mathbb{B}[0,\frac{\epsilon}{2\beta}] \subseteq A^0 + \mathbb{B}[0,\frac{\epsilon}{2\beta}].
	\end{equation}	
	To prove $B^i \cap B^j = \emptyset$, for every $i,j \geq 0$ with $i \ne j$, without loss of generality, assume that $i < j$. Note that $A^{j} \subseteq A^{i+1}$. From \Cref{ball1}, we have $B^i \cap H^{\epsilon}(g,x^i) = \emptyset$. Moreover,
	\begin{equation*}\label{Aj+Binclusion}
		A^j + \mathbb{B}[0,\frac{\epsilon}{2\beta}] \subseteq A^{i+1} + \mathbb{B}[0,\frac{\epsilon}{2\beta}] = (A^i \cap \bigcap_{\substack{v \in \ver A^i \\ g(x^v)-g^i(x^v) > \epsilon}}H(g,x^v)) + \mathbb{B}[0,\frac{\epsilon}{2\beta}] \subseteq H(g,x^i) + \mathbb{B}[0,\frac{\epsilon}{2\beta}],
	\end{equation*}
	where $H(g,x^{i})$ is a supporting halfspace to $\epi g$ at $(x^i,g(x^i))$. Using \Cref{eq1:lem21}, we get
	\begin{equation*}
		A^{j} + \mathbb{B}[0,\frac{\epsilon}{2\beta}] \subseteq H(g,x^{i}) + \mathbb{B}[0,\frac{\epsilon}{2\beta}] \subseteq H^{\epsilon}(g,x^{i}).
	\end{equation*}
	This implies that $B^i \cap (A^{j}+\mathbb{B}[0,\frac{\epsilon}{2\beta}]) = \emptyset$. On the other hand, $B^{j} \subseteq A^{j} + \mathbb{B}[0,\frac{\epsilon}{2\beta}]$ from \eqref{BkinA0+Ball1}. Hence $B^i \cap B^{j} = \emptyset$. This is a contradiction as these imply that there is an infinite number of disjoint sets, with the same positive volume, contained in the compact set $A^0 + \mathbb{B}[0,\frac{\epsilon}{2\beta}]$.	
\end{proof}

\begin{remark}\label{rem:unifconv}
	If $ \epsilon = 0$ in \Cref{alg_1} (resp. \Cref{alg_1_mod})), then the sequence $\{g^k\}_{k \geq 0}$ outputted by the algorithm converges uniformly to $g$. The pointwise convergence follows from \Cref{cor:convergence} (resp. \Cref{thm:finitealg2}) and the uniform convergence holds as $X$ is compact. 
\end{remark}

\section{An Algorithm for solving DC programming problems}\label{sect:alg_2}
The solution methodology proposed in \Cref{sect:alg1} is a naive approach for solving the DC programming problems. To use the existing exact solution algorithm from \cite{lohne2017solving} for polyhedral DC programming problems, Algorithms \ref{alg_1} and \ref{alg_1_mod} return an $\epsilon$-polyhedral underestimator of a given convex function over a convex compact set $X$. In this section, we propose an algorithm (\Cref{alg_3})) to solve the general DC programming problems in a more direct sense. Even though the general idea is, in a way, similar to \Cref{alg_1}, \Cref{alg_3} does not compute an $\epsilon$-polyhedral underestimator of the convex function $g$ over the whole feasible set $X$. Instead, it keeps updating the underestimator locally while looking for an $\epsilon$-solution of the DC problem.

The next theorem will help explain the working mechanism of \Cref{alg_3}.
\begin{theorem}
	\label{thm:alg2}
	Let $\epsilon > 0$, $g : \mathbb{R}^n \rightarrow \mathbb{R}$, and $h : \mathbb{R}^n \rightarrow \mathbb{R}$ be convex functions on a convex compact set $X \subseteq \mathbb{R}^n$. Let $\bar{g}:\mathbb{R}^n \rightarrow \mathbb{R}$ be a polyhedral underestimator of $g$ over $X$. Let $\bar{x} \in X$ be an optimal solution to the problem \eqref{Pg}, and $z^*$, $\bar{z}$ be the optimal values of the problems \eqref{P},\eqref{Pg} respectively. 
	If $g(\bar{x})-\bar{g}(\bar{x}) \leq \epsilon$, then $\bar{x}$ is an $\epsilon$-solution of \eqref{P}. Moreover, $0 \leq z^*-\bar{z} \leq \epsilon$ holds.
\end{theorem}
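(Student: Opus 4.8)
The plan is to mimic the first part of the proof of \Cref{thm:alg1_opgap}, but observe that the only place the $\epsilon$-underestimator hypothesis was actually used there is a single pointwise bound at the optimal solution of the approximate problem; here that bound is handed to us directly by assumption. So no uniform approximation quality is needed.

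First I would record the easy inequality $\bar z \le z^*$. Since $\bar g$ is a polyhedral underestimator of $g$ on $X$, we have $\bar g(x)-h(x)\le g(x)-h(x)$ for every $x\in X$; evaluating the right-hand side at a minimizer $x^*$ of \eqref{P} and taking the infimum on the left gives $\bar z=\min_{x\in X}(\bar g(x)-h(x))\le g(x^*)-h(x^*)=z^*$. This yields $z^*-\bar z\ge 0$.

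Next I would produce the upper estimate via the obvious decomposition at $\bar x$:
\begin{align*}
	g(\bar x)-h(\bar x) &= \bigl(g(\bar x)-\bar g(\bar x)\bigr) + \bigl(\bar g(\bar x)-h(\bar x)\bigr) \le \epsilon + \bar z \le \epsilon + z^*,
\end{align*}
where the first inequality uses the hypothesis $g(\bar x)-\bar g(\bar x)\le\epsilon$ together with $\bar g(\bar x)-h(\bar x)=\bar z$ (as $\bar x$ solves \eqref{Pg}), and the second uses $\bar z\le z^*$ from the previous step. Since $\bar x\in X$, this display shows exactly that $\bar x$ is an $\epsilon$-solution of \eqref{P} in the sense of \Cref{defn:eps}. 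Finally, feasibility of $\bar x$ for \eqref{P} also gives $z^*\le g(\bar x)-h(\bar x)$, and chaining this with $g(\bar x)-h(\bar x)\le \bar z+\epsilon$ (the display again) yields $z^*-\bar z\le\epsilon$, completing the proof.

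There is no genuine obstacle in this argument; the only thing worth highlighting — and the reason the theorem is stated separately from \Cref{thm:alg1_opgap} — is the conceptual point that $\bar g$ need not be close to $g$ anywhere except at the single point $\bar x$ returned by solving \eqref{Pg}, because $\bar x$ is itself feasible for \eqref{P}. This is precisely what lets \Cref{alg_3} refine the underestimator only locally rather than over all of $X$.
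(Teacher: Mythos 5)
Your proof is correct and follows essentially the same route as the paper's: the inequality $\bar z\le z^*$ from $\bar g\le g$ on $X$, the pointwise decomposition $g(\bar x)-h(\bar x)\le \bar g(\bar x)-h(\bar x)+\epsilon=\bar z+\epsilon\le z^*+\epsilon$, and the chain $z^*\le g(\bar x)-h(\bar x)\le\bar z+\epsilon$ for the final bound. No discrepancies.
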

\begin{proof}
	Note that $\bar{z} \leq z^*$ holds since $\bar{g}(x) \leq g(x)$ holds for all $x \in X$. Then, $\bar{x}$ is an $\epsilon$-solution of \eqref{P} since $g(\bar{x})-h(\bar{x}) \leq \bar{g}(\bar{x})-h(\bar{x}) + \epsilon = \bar{z} + \epsilon \leq  z^* + \epsilon.$ This also implies that $g(\bar{x})-h(\bar{x})-\bar{z} \leq \epsilon$. Then, as $z^*$ is the optimal value of \eqref{P}, we obtain, $ z^*- \bar{z} \leq g(\bar{x})-h(\bar{x})-\bar{z} \leq \epsilon$.   
\end{proof} 
As \Cref{thm:alg2} suggests, in \Cref{alg_3}, the aim is to generate a polyhedral underestimator $\bar{g}$ of the function $g$, such that $g(\bar{x}) - \bar{g}(\bar{x}) \leq \epsilon$, where $\bar{x}$ solves \eqref{Pg} optimally. To that end, we use some terminology as exactly they are used in \Cref{sect:alg1}. In particular, let $\mathcal{C}$ be as given in \eqref{C}. The initialization of \Cref{alg_3} is also the same as in \Cref{alg_1}. In particular, we set $\bar{C}^0 \coloneqq C^0$, see \eqref{eq:C0}, as the initial outer approximation of $\mathcal{C}$. Recall that $K$ is the recession cone of $\mathcal{C}$ and $\bar{C}^0$, see \eqref{eq:K}. Moreover, $\bar{C}^0 = \epi g^0 \cap (X \times \mathbb{R})$, where $g^0(x)$ is as in \eqref{eq:g0}. As will be explained below, the algorithm iterates by updating the epigraph of the current underestimator so that for each iteration $k$,  $\bar{C}^k = \epi g^k \cap (X \times \mathbb{R})$ holds true.

At iteration $k$, where $k\geq 1$, the algorithm considers the current underestimator $g^{k-1}$ of the function $g$ and computes an optimal solution to the following problem:
\begin{equation*}
	\tag{$P_k$}
	\label{eq:pk}
	\min_{x \in X}(g^{k-1}(x)-h(x)).
\end{equation*}
Since $g^{k-1}$ is a polyhedral convex function, the existence of an optimal solution among the vertices of $\bar{C}^{k-1}$ is guaranteed by \cite[Corollary 10]{lohne2017solving}. Hence, an optimal solution $x^k$ can be computed as $$(x^k,y^k) \in \argmin_{(x,y)\in \ver \bar{C}^{k-1}}(y-h(x)).$$ Note that $y^k = g^{k-1}(x^k)$ holds by construction. The algorithm checks if $g(x^k)-g^{k-1}(x^k) \leq \epsilon$. If this is the case, $x^k$ is returned. Otherwise, a supporting halfspace to $\epi g$ at $(x^k,g(x^k))$ is generated. The current outer approximation of $\mathcal{C}$ and the polyhedral underestimator of $g$ are updated accordingly, see \Cref{alg_3} for the details.

\begin{algorithm}[H]
	\caption{An algorithm to compute an $\epsilon$-solution to \eqref{P}}
	\begin{algorithmic}[1] \label{alg_3}
		\STATE \textbf{Input:} Problem \eqref{P}, $\epsilon > 0$.
		\STATE Set $k = 0$, $ x^0 :=\frac{l+u}{2}$, set $\bar{C}^0, g^0$ as in \eqref{eq:C0},\eqref{eq:g0}, respectively.
		\WHILE{$\TRUE$}
		\STATE $k \gets k+1$;
		\STATE Compute $\ver \bar{C}^{k-1}$, let $(x^k,y^k) \in \arg\min_{(x,y)\in \ver \bar{C}^{k-1}}(y-h(x))$ (i.e., solve \eqref{eq:pk});
		\IF{$g(x^k) - y^k > \epsilon$}
		\STATE $s^k(x) = g(x^k)+c(x^k)^\T(x-x^k)$; 
		\STATE $g^k(x) \gets \max\{g^{k-1}(x),s^k(x)\}$;
		\STATE $\bar{C}^k \coloneqq \bar{C}^{k-1}\cap H(g,x^k)$; 
		\ELSE
		\STATE break;
		\ENDIF
		\ENDWHILE
		\RETURN $ x^k: \epsilon\text{-solution of \eqref{P} }$.
	\end{algorithmic}
\end{algorithm}

First, we show that when terminates, \Cref{alg_3} returns an $\epsilon$-solution of \eqref{P}.

\begin{theorem}\label{alg3_correct}
	Let $\epsilon > 0$. When \Cref{alg_3} stops, it returns an $\epsilon$-solution of \eqref{P}.	
\end{theorem}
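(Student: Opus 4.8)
The plan is to reduce the claim to \Cref{thm:alg2} applied with $\bar{g}=g^{k-1}$, where $k$ denotes the (finite) iteration at which \Cref{alg_3} breaks out of the while loop. So suppose the algorithm stops at iteration $k$; then the test at line~6 fails, i.e.\ $g(x^k)-y^k\leq\epsilon$, and $x^k$ is returned. What remains is (i) to identify $x^k$ as an optimal solution of \eqref{Pg} with $\bar{g}=g^{k-1}$, and (ii) to rewrite the stopping inequality as $g(x^k)-g^{k-1}(x^k)\leq\epsilon$ so that the hypothesis of \Cref{thm:alg2} is met.

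First I would establish, by induction on $j\geq 0$, two invariants: that $g^j$ is a polyhedral convex underestimator of $g$ on $\R^n$ (hence on $X$), and that $\bar{C}^j=\epi g^j\cap(X\times\R)$. For $j=0$ both hold by \eqref{eq:C0}, \eqref{eq:g0}, and \Cref{rem:func_s} (which gives $g^0\leq g$ and $H(g,x^0)=\epi g^0$). For the step, line~7 produces $s^k$ with $s^k\leq g$ and $H(g,x^k)=\epi s^k$ by \Cref{rem:func_s}, so $g^k=\max\{g^{k-1},s^k\}$ is again a polyhedral convex underestimator of $g$, and
\[
\bar{C}^k=\bar{C}^{k-1}\cap H(g,x^k)=\bigl(\epi g^{k-1}\cap\epi s^k\bigr)\cap(X\times\R)=\epi g^k\cap(X\times\R),
\]
using $\epi\max\{g^{k-1},s^k\}=\epi g^{k-1}\cap\epi s^k$. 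A consequence of the second invariant, combined with $\rec\bar{C}^j=K=\cone\{e^{n+1}\}$, is that every vertex $(x,y)\in\ver\bar{C}^j$ lies on the lower boundary, so $y=g^j(x)$; in particular $y^k=g^{k-1}(x^k)$, so the stopping inequality reads $g(x^k)-g^{k-1}(x^k)\leq\epsilon$.

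Next I would argue that $x^k$ is a global optimal solution of \eqref{eq:pk}, which is precisely \eqref{Pg} for $\bar{g}=g^{k-1}$. Indeed $\min_{x\in X}(g^{k-1}(x)-h(x))=\min\{y-h(x)\mid(x,y)\in\bar{C}^{k-1}\}$ because $y$ enters the objective with positive coefficient; since $g^{k-1}$ is polyhedral convex, \cite[Corollary 10]{lohne2017solving} guarantees that this minimum is attained at a vertex of $\bar{C}^{k-1}=\epi g^{k-1}\cap(X\times\R)$. As $(x^k,y^k)\in\argmin_{(x,y)\in\ver\bar{C}^{k-1}}(y-h(x))$, the point $x^k$ is optimal for \eqref{eq:pk}. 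Finally I would apply \Cref{thm:alg2} with $\bar{g}=g^{k-1}$ (a polyhedral underestimator of $g$ over $X$ by the first invariant) and $\bar{x}=x^k$ (optimal for \eqref{Pg}): the stopping condition gives $g(\bar{x})-\bar{g}(\bar{x})=g(x^k)-g^{k-1}(x^k)\leq\epsilon$, so \Cref{thm:alg2} yields that $x^k$ is an $\epsilon$-solution of \eqref{P}.

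The proof is essentially bookkeeping; the only step that is not a routine verification is the appeal to \cite[Corollary 10]{lohne2017solving}, which is what certifies that the vertex-restricted minimization carried out in line~5 actually returns a global optimum of the polyhedral DC subproblem \eqref{eq:pk} rather than merely a best vertex. Maintaining the identity $\bar{C}^k=\epi g^k\cap(X\times\R)$ throughout the iterations (and hence $y^k=g^{k-1}(x^k)$) is the other point that must be stated carefully, but it follows directly from \Cref{rem:func_s} and the update rules in lines~8--9.
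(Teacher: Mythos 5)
Your proposal is correct and follows essentially the same route as the paper's proof: establish the invariants that $g^{k-1}$ is a polyhedral underestimator of $g$ with $\bar{C}^{k-1}=\epi g^{k-1}\cap(X\times\R)$ (hence $y^k=g^{k-1}(x^k)$), invoke the corollary from \cite{lohne2017solving} to certify that the vertex minimization in line~5 solves \eqref{eq:pk} globally, and then apply \Cref{thm:alg2}. You simply spell out the induction that the paper leaves implicit.
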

\begin{proof}
	For any $k\geq 0$, $\ver \bar{C}^k \ne \emptyset$, $g^k$ is a polyhedral underestimator  of $g$, and ${\bar{C}^k} = \epi g^k \cap (X \times \R)$ holds. Then, in line 5 of \Cref{alg_3}, the algorithm returns a solution $x^k$ to \eqref{eq:pk} by \cite[Corollary 11]{lohne2017solving}, where $y^k = g^{k-1}(x^k)$. If the algorithm stops at iteration $\bar{k}$ for some $\bar{k}\geq 1$, then $g(x^{\bar{k}}) - g^{\bar{k}-1}(x^{\bar{k}}) \leq \epsilon$ holds and by \Cref{thm:alg2}, $x^{\bar{k}}$ is an $\epsilon$-solution to \eqref{P}. 
\end{proof} 

Next, we study the convergence of \Cref{alg_3}. In particular, we show that the limit point of the sequence $\{x^k\}_{k \geq 0}$, found by \Cref{alg_3}, is a global minimizer to the DC program \eqref{P} if $\epsilon$ is set to zero. Let us introduce the following quantities:
\begin{align} 
	\label{eq:ak_bk}
		a_k \coloneqq g^{k-1}(x^k) - h(x^k), \quad 	b_k \coloneqq g^k(x^k) - h(x^k) \quad \text{for~}  k\in\{1,2,\ldots\}.  
\end{align}
The following lemma highlights some properties of the functions $g^k$ and the quantities $a_k$ and $b_k$. 
\begin{lemma}
	\label{lemma1}
Assume $\epsilon = 0$ in \Cref{alg_3}. Let $g^k, s^k$ be as in \Cref{alg_3} and $b_k,a_k$ be as in \eqref{eq:ak_bk}.  Then, \begin{enumerate}[(a)]
		\item $g^k (x^k)=g(x^k)=s^k(x^k)$ holds for all $k\in\{1,2,\ldots\}$,
		\item $a_k \leq \min_{x \in X}(g(x)-h(x)) \leq b_k$ holds for all $k\in\{1,2,\ldots\}.$
	\end{enumerate}
\end{lemma}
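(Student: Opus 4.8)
The plan is to unwind the definitions in \Cref{alg_3} directly, relying on two facts established earlier in the paper. First, from the proof of \Cref{alg3_correct} we know that for every $k\ge 0$, $g^k$ is a polyhedral underestimator of $g$ on $X$ and $\bar C^k=\epi g^k\cap(X\times\R)$; in particular every vertex $(x,y)\in\ver\bar C^{k-1}$ satisfies $y=g^{k-1}(x)$ and $x\in X$ (since $\bar C^{k-1}\subseteq X\times\R$). Second, as noted in the description preceding \eqref{eq:pk}, the point $x^k$ produced in line~5 is an optimal solution of \eqref{eq:pk} by \cite[Corollary 10]{lohne2017solving}. The hypothesis $\epsilon=0$ is used only to guarantee that the algorithm never executes the break statement, so that $g^k$, $s^k$, $a_k$, and $b_k$ are well defined for every $k\in\{1,2,\dots\}$.

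For part (a), line~7 gives $s^k(x^k)=g(x^k)+c(x^k)^\T(x^k-x^k)=g(x^k)$, and line~8 gives $g^k(x^k)=\max\{g^{k-1}(x^k),s^k(x^k)\}=\max\{g^{k-1}(x^k),g(x^k)\}$. Since $x^k\in X$ and $g^{k-1}$ is an underestimator of $g$ on $X$, we have $g^{k-1}(x^k)\le g(x^k)$, so the maximum equals $g(x^k)$. Hence $g^k(x^k)=g(x^k)=s^k(x^k)$.

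For part (b), consider first the lower bound. Because $x^k$ solves \eqref{eq:pk} optimally and $y^k=g^{k-1}(x^k)$, the quantity $a_k=g^{k-1}(x^k)-h(x^k)$ is exactly the optimal value of \eqref{eq:pk}, i.e. $a_k=\min_{x\in X}(g^{k-1}(x)-h(x))$. Since $g^{k-1}(x)\le g(x)$ for every $x\in X$, it follows that $a_k\le\min_{x\in X}(g(x)-h(x))$. For the upper bound, part (a) yields $b_k=g^k(x^k)-h(x^k)=g(x^k)-h(x^k)$; as $x^k\in X$, this is at least $\min_{x\in X}(g(x)-h(x))$. Combining the two gives $a_k\le\min_{x\in X}(g(x)-h(x))\le b_k$ for all $k\in\{1,2,\dots\}$.

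I do not expect any genuine difficulty here; the only point requiring care is the bookkeeping that links the vertex-based minimization in line~5 to the underlying problem \eqref{eq:pk} over $X$. This rests on the identity $\bar C^{k-1}=\epi g^{k-1}\cap(X\times\R)$ (so that $y^k=g^{k-1}(x^k)$ and $x^k\in X$) together with \cite[Corollary 10]{lohne2017solving}, and on the elementary but essential fact that each $g^{k-1}$ underestimates $g$ on $X$, which is already recorded in the proof of \Cref{alg3_correct}.
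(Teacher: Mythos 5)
Your proof is correct and follows essentially the same route as the paper's: part (a) is the same two-line computation exploiting $s^k(x^k)=g(x^k)$ and the underestimator property (the paper sandwiches via $g^k(x^k)\le g(x^k)=s^k(x^k)\le g^k(x^k)$, you evaluate the maximum directly — the same facts in a slightly different order), and part (b) is the identical chain $a_k=\min_{x\in X}(g^{k-1}(x)-h(x))\le\min_{x\in X}(g(x)-h(x))\le g(x^k)-h(x^k)=b_k$. The extra bookkeeping you spell out (the identity $\bar C^{k-1}=\epi g^{k-1}\cap(X\times\R)$ and the role of $\epsilon=0$) is implicit in the paper but correctly handled.
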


\begin{proof}
	\begin{enumerate}[(a)]
		\item It is clear as we have $g^k(x^k) \leq g(x^k) = s^k(x^k) \leq \max\{g^{k-1}(x^k), s^k(x^k)\} = g^k(x^k). $
		\item Since $x^k$ is an optimal solution of \eqref{eq:pk}, and $g^{k-1}$ is an underestimator of $g$, we have
		\begin{equation*}
			a_k = g^{k-1}(x^k)-h(x^k) = \min_{x \in X}(g^{k-1}(x)-h(x)) \leq \min_{x \in X}(g(x)-h(x)) \leq g(x^k)-h(x^k) = b_k,
		\end{equation*}
		where the last equality is by (a). 
	\end{enumerate}
\end{proof}

\begin{theorem}\label{alg3conv}
	Assume $\epsilon = 0$ in \Cref{alg_3}. Every limit point of the sequence $\{x^k\}_{k\geq0}$ outputted by \Cref{alg_3} is a global minimizer of \eqref{P}.
\end{theorem}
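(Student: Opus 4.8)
The plan is to combine the two-sided estimate of \Cref{lemma1}(b) with a subsequential argument showing that the approximation gap $g(x^k)-g^{k-1}(x^k)$ vanishes along any convergent subsequence of iterates. Write $z^\ast:=\min_{x\in X}(g(x)-h(x))$. From \eqref{eq:ak_bk} and \Cref{lemma1}(a) we have $b_k=g(x^k)-h(x^k)$ and $b_k-a_k=g^k(x^k)-g^{k-1}(x^k)=g(x^k)-g^{k-1}(x^k)\ge 0$, while \Cref{lemma1}(b) gives $a_k\le z^\ast\le b_k$ for all $k\ge 1$. Consequently, if $x^{k_j}\to\bar x$ along a subsequence and one can show $g(x^{k_j})-g^{k_j-1}(x^{k_j})\to 0$, then both $a_{k_j}$ and $b_{k_j}$ converge to $z^\ast$; since $g$ and $h$ are continuous, $b_{k_j}=g(x^{k_j})-h(x^{k_j})\to g(\bar x)-h(\bar x)$, whence $g(\bar x)-h(\bar x)=z^\ast$, and as $\bar x\in X$ this makes $\bar x$ a global minimizer of \eqref{P}. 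So the whole task reduces to proving that the gap vanishes along a convergent subsequence.

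We may assume \Cref{alg_3} runs indefinitely, since otherwise its output is a $0$-solution, hence a global minimizer, by \Cref{alg3_correct}. Every iterate lies in $X$ (because $(x^k,y^k)\in\ver\bar C^{k-1}$ and $\bar C^{k-1}\subseteq X\times\R$), so by compactness of $X$ there is a subsequence $x^{k_j}\to\bar x\in X$ with $k_1<k_2<\cdots$. Fix indices $j<j'$, so $k_j\le k_{j'}-1$. The underestimators are nondecreasing ($g^k=\max\{g^{k-1},s^k\}$ in \Cref{alg_3}), hence $g^{k_{j'}-1}\ge g^{k_j}\ge s^{k_j}$; using that $g^{k_{j'}-1}$ is an underestimator of $g$ (as in the proof of \Cref{alg3_correct}), that $s^{k_j}(x^{k_j})=g(x^{k_j})$ by \Cref{lemma1}(a), and that $c(x^{k_j})\in\partial g(x^{k_j})$, we obtain
\begin{equation*}
	0 \;\le\; g(x^{k_{j'}})-g^{k_{j'}-1}(x^{k_{j'}}) \;\le\; g(x^{k_{j'}})-s^{k_j}(x^{k_{j'}}) \;=\; g(x^{k_{j'}})-g(x^{k_j})-c(x^{k_j})^\T\big(x^{k_{j'}}-x^{k_j}\big).
\end{equation*}

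It then remains to drive the right-hand side to $0$. By \cite[Theorem 24.7]{rockafellar1997convex}, $\bigcup_{x\in X}\partial g(x)$ is bounded, so $\norm{c(x^{k_j})}_*$ is uniformly bounded in $j$; combined with continuity of $g$, choosing $j=j'-1$ in the display (so that $g(x^{k_{j'}})-g(x^{k_{j'-1}})\to 0$ and $\abs{c(x^{k_{j'-1}})^\T(x^{k_{j'}}-x^{k_{j'-1}})}\le\norm{c(x^{k_{j'-1}})}_*\norm{x^{k_{j'}}-x^{k_{j'-1}}}\to 0$ as $j'\to\infty$) yields $g(x^{k_{j'}})-g^{k_{j'}-1}(x^{k_{j'}})\to 0$, and the first paragraph finishes the proof. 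The crux of the argument — the step with real content — is the displayed chain of inequalities: the cut $H(g,x^{k_j})$ generated at iteration $k_j$ pins $g^{k_j}$ to $g$ at $x^{k_j}$, and one has to propagate this local tightness, through monotonicity of the sequence $(g^k)$ and the uniform subgradient bound that controls the affine remainder term, into control of the gap at the later iterate $x^{k_{j'}}$, which is itself close to $\bar x$.
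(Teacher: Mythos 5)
Your proposal is correct and follows essentially the same route as the paper: extract a convergent subsequence by compactness, use \Cref{lemma1}(b) to sandwich the optimal value between $a_k$ and $b_k$, and kill the gap by evaluating the affine cut generated at $x^{k_{j'-1}}$ at the later iterate $x^{k_{j'}}$, controlling the remainder via the uniform subgradient bound from \cite[Theorem 24.7]{rockafellar1997convex} together with H\"older's inequality and continuity. The only cosmetic difference is that you propagate a single cut $s^{k_{j'-1}}$ to bound $b_{k_{j'}}-a_{k_{j'}}$ directly, whereas the paper shifts the whole maximum $g^{k_{j-1}}=\max_{0\le i\le k_{j-1}}s^i$ to relate $b_{k_{j-1}}$ to $a_{k_j}$; the substance is identical.
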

\begin{proof}
	The compactness of $X$ implies that the limit points of the sequence $\{x^k\}_{k \geq 0}$ exist in $X$. Let $\{x^{k_j}\}_{j \geq 1}$ be a convergent subsequence. With the convention that $s^0(x) = g^0(x)$ (used in the second equality below) and using definition of $s^i$ for $i\geq 1$, we have 
	\begin{align}
		b_{k_{j-1}} &= g^{k_{j-1}}(x^{k_{j-1}}) - h(x^{k_{j-1}}) \notag\\
		&= \max_{0 \leq i \leq k_{j-1}} (s^i(x^{k_{j-1}}))-h(x^{k_{j-1}})\nonumber\\
		&=  \max_{0 \leq i \leq k_{j-1}} (s^i(x^{k_j})+c(x^i)^\T (x^{k_{j-1}}-x^{k_j})) -h(x^{k_{j-1}})\nonumber \\
		&\leq \max_{0 \leq i \leq k_{j-1}} s^i(x^{k_j})+ \max_{0 \leq i \leq k_{j-1}} \|c(x^i)\| \| x^{k_{j-1}}-x^{k_j}\|_* -h(x^{k_{j-1}})\nonumber\\
		&=g^{k_{j-1}}(x^{k_j})-h(x^{k_j}) + \max_{0 \leq i \leq k_{j-1}}\|c(x^i)\| \| x^{k_{j-1}}-x^{k_j}\|_*+h(x^{k_j})-h(x^{k_{j-1}})\nonumber\\
		&=a_{k_j} + \max_{0 \leq i \leq k_{j-1}}\|c(x^i)\| \| x^{k_{j-1}}-x^{k_j}\|_*+h(x^{k_j})-h(x^{k_{j-1}}),\nonumber
	\end{align}
	where the inequality is by the triangle and Hölder inequalities. As $h$ is continuous, \\$\lim_{j\to \infty} \left(\max_{0 \leq i \leq k_{j-1}}\|c(x^i)\| \| x^{k_{j-1}}-x^{k_j}\|_*+h(x^{k_j})-h(x^{k_{j-1}})\right) = 0$. Moreover, by \Cref{lemma1} (b), we have $a_{k_j} \leq \min_{x \in X}(g(x)-h(x)) \leq b_{k_j}$. Hence, we obtain $$\limsup_{j \rightarrow \infty} b_{k_{j-1}} \leq \min_{x \in X}(g(x)-h(x)) \leq \liminf_{j \rightarrow \infty} b_{k_{j}}.$$ This shows that  
	\begin{equation*}
		\lim_{j \to \infty}(g(x^{k_j})-h(x^{k_j})) = \lim_{j \to \infty}(g^{k_j}(x^{k_j})-h(x^{k_j})) = \lim_{j \to \infty}b_{k_j} = \min_{x \in X}(g(x)-h(x)),
	\end{equation*} where we use \Cref{lemma1} (a) in the first equality.
\end{proof}

\begin{corollary}
	\label{finite_alg3}
	Let $x^*$ be the global minimizer of \eqref{P} and $z^* = g(x^*)-h(x^*)$. Then, \Cref{alg_3} stops after finitely many iterations, when $\epsilon$ is set to a positive number, $\Tilde{\epsilon}$.
\end{corollary}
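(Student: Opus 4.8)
The plan is to argue by contradiction, reusing the volumetric packing argument from the proof of \Cref{thm:finitealg2}, adapted to the two features that distinguish \Cref{alg_3}: only a single supporting halfspace is added per iteration, and the cut point $x^k$ is chosen as a minimizer of $g^{k-1}-h$ over $\ver\bar{C}^{k-1}$ rather than as the farthest vertex. Suppose \Cref{alg_3} never terminates when $\epsilon=\tilde{\epsilon}>0$; then for every $k\ge 1$ the test in line~6 fails, i.e.\ $g(x^k)-y^k=g(x^k)-g^{k-1}(x^k)>\tilde{\epsilon}$. As in \Cref{sect:alg1}, introduce the compact sets $A$ from \eqref{eq:A} and $A^k:=\epi g^k\cap(X\times\R)\cap S=\bar{C}^k\cap S$ for $k\ge 0$, which are nested, $A\subseteq A^{k+1}\subseteq A^k$, and satisfy $A^{(\cdot)}+K=\epi g^{(\cdot)}\cap(X\times\R)$; let $\beta$ be the constant from \Cref{rem:gamma}, which is available since a finite-valued convex function is closed and proper.

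Next I would set $v^k:=(x^k,g^{k-1}(x^k))$ and $B^k:=\{v^k\}+\mathbb{B}[0,\tfrac{\tilde{\epsilon}}{4\beta}]$ for $k\ge 1$, and record two facts. First, $v^k\notin A$, because $g(x^k)>g^{k-1}(x^k)=y^k$ forces $v^k\notin\epi g$; while $v^k\in A^{k-1}$, because $(x^k,y^k)\in\ver\bar{C}^{k-1}$ with $y^k=g^{k-1}(x^k)\le g(x^k)\le b$. Consequently $B^k\subseteq A^{k-1}+\mathbb{B}[0,\tfrac{\tilde{\epsilon}}{2\beta}]\subseteq A^0+\mathbb{B}[0,\tfrac{\tilde{\epsilon}}{2\beta}]$, so every $B^k$ lies inside one fixed compact set. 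Second, the $B^k$ are pairwise disjoint: for $i<j$, \Cref{ball1} applied with $\bar{g}=g^{i-1}$ and $\bar{x}=x^i$ (legitimate since $g(x^i)-g^{i-1}(x^i)>\tilde{\epsilon}$ and $v^i\notin A$) gives $B^i\cap H^{\tilde{\epsilon}}(g,x^i)=\emptyset$; on the other hand $\bar{C}^i=\bar{C}^{i-1}\cap H(g,x^i)\subseteq H(g,x^i)$, and since $j-1\ge i$ we get $A^{j-1}\subseteq A^i\subseteq\bar{C}^i\subseteq H(g,x^i)$, so by \Cref{eq1:lem21} $B^j\subseteq A^{j-1}+\mathbb{B}[0,\tfrac{\tilde{\epsilon}}{2\beta}]\subseteq H(g,x^i)+\mathbb{B}[0,\tfrac{\tilde{\epsilon}}{2\beta}]\subseteq H^{\tilde{\epsilon}}(g,x^i)$; hence $B^i\cap B^j=\emptyset$.

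Having infinitely many pairwise disjoint translates of a fixed ball of positive volume inside a bounded set is impossible, which contradicts non-termination; therefore \Cref{alg_3} stops after finitely many iterations. The $x^*$, $z^*$ appearing in the statement play no role in the finiteness claim itself and are merely fixed here for use in the subsequent convergence discussion.

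I expect the only genuinely delicate point to be checking that the hypotheses of \Cref{ball1} and \Cref{eq1:lem21} hold at the chosen points, in particular that $v^i\notin A$ and that $H^{\tilde{\epsilon}}(g,x^i)$, as defined in \eqref{H_eps}, depends only on $x^i$ and the fixed subgradient choice and not on the underestimator, so that the halfspace in \Cref{ball1} and the one in \Cref{eq1:lem21} coincide; the rest is a direct transcription of the packing argument of \Cref{thm:finitealg2}. A minor secondary point is that the single-cut update $\bar{C}^k=\bar{C}^{k-1}\cap H(g,x^k)$ makes the nesting $A^{j-1}\subseteq A^i\subseteq H(g,x^i)$ immediate, so none of the $R$-based bookkeeping of \Cref{alg_1_mod} is required here.
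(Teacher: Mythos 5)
Your proof is correct, but it takes a genuinely different route from the paper. The paper derives finiteness from \Cref{alg3conv}: running the algorithm with $\epsilon=0$, the quantities $a_k$ and $b_k$ of \eqref{eq:ak_bk} both converge to $z^*$ along a subsequence, so $g(x^k)-g^{k-1}(x^k)=b_k-a_k$ eventually drops below $\tilde{\epsilon}$; the run with $\epsilon=\tilde{\epsilon}$ coincides with the $\epsilon=0$ run up to that index (modulo a ``without loss of generality'' on the argmin selection in line~5) and therefore stops there. You instead transplant the volumetric packing argument of \Cref{thm:finitealg2}: if the algorithm never stops, each $v^k=(x^k,g^{k-1}(x^k))$ carries a ball $B^k$ of fixed radius $\tfrac{\tilde{\epsilon}}{4\beta}$, all contained in the compact set $A^0+\mathbb{B}[0,\tfrac{\tilde{\epsilon}}{2\beta}]$ and pairwise disjoint by \Cref{ball1} and \Cref{eq1:lem21} --- a contradiction. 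I checked the hypotheses you flag as delicate and they do hold: $v^i\notin A$ since $g(x^i)>g^{i-1}(x^i)$, $v^k\in A^{k-1}$ since $y^k\le g(x^k)\le b$, the halfspace $H^{\tilde{\epsilon}}(g,x^i)$ in \eqref{H_eps} depends only on $x^i$ and the chosen subgradient, and the single-cut update gives $A^{j-1}\subseteq A^i\subseteq H(g,x^i)$ directly. Your argument buys independence from \Cref{alg3conv} and avoids the tie-breaking subtlety in the paper's ``the first $\tilde{K}$ iterations would be the same'' step, which is the least rigorous point of the paper's proof; the paper's argument, in exchange, is shorter once \Cref{alg3conv} is available and links the stopping index to the optimality gap $|g(x^k)-h(x^k)-z^*|$, which your packing bound does not. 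You are also right that $x^*$ and $z^*$ are inert in the finiteness claim itself.
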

\begin{proof}
	By \Cref{alg3conv}, the sequence $\{x^k\}_{k \geq 0}$ outputted by \Cref{alg_3} converges to $x^*$ if $\epsilon$ is set to zero. Then for any $\Tilde{\epsilon} > 0$, there exists $\Tilde{K} \in \mathbb{N}$ such that $|g(x^k)-h(x^k)-z^*| \leq \Tilde{\epsilon}$ for $k \geq \Tilde{K}$. Note that if the algorithm is run for $\epsilon = \Tilde{\epsilon}$ instead of $\epsilon = 0$, then the first $\Tilde{K}$ iterations would be the same by the structure of the algorithm. In particular, we can assume without loss of generality that the same $(x^k,y^k)$ in line 5 of \Cref{alg_3} is selected for every $k \leq \Tilde{K}$. This implies that \Cref{alg_3} stops in $\Tilde{K}$ iterations when it runs with $\epsilon = \Tilde{\epsilon}$.
\end{proof}

\section{Computational results}\label{sect:ex}
In this section, we solve some test examples from \cite{ferrer2015solving} to assess the performance of the proposed algorithms, which are implemented using MATLAB R2022a along with \textit{bensolve tools} \cite{lohne2017vector} to solve the vertex enumeration problem in each iteration. The tests are run on a computer having a 3.6 GHz Intel Core i7 with 64 GB RAM.

We consider eight examples of the form 
\begin{align}
	\text{minimize} \quad f(x) \quad \text{subject to} \quad  l \leq x \leq u, \nonumber
\end{align}
where $f : \R^n \to \R$ is a DC function written as $f = g - h$ for convex functions $g,h : \R^n \to \R$ and $l,u \in \R^n$. We denote the vector of ones in $\R^n$ by $e$. The examples are listed below.
\begin{enumerate}
	\item \cite[Pr. 10.3]{ferrer2015solving} $n=1, f(x):= -\log(x) + \min\{\sqrt{\abs{1-x}},(2-x)^3,\sqrt{\abs{3-x}}\}$, $l =1, u= 3$. DC components are
	\begin{align}
		g(x) &= 6x^2-12x+8+\max\{0,-x^3\}-\log(x) \coloneqq G(x)-\log(x),\nonumber\\ 
		h(x) & = \max\{-\sqrt{\abs{3-x}} + G(x), -\sqrt{\abs{1-x}}+G(x), \max\{0,x^3\}\}.\nonumber
	\end{align}		
	The problem attains an optimal solution at $x^* = 3$ with minimum value $-1-\log 3$. 
	\item \cite[Pr. 10.1]{ferrer2015solving} $n=2, f(x) := -\sin(\sqrt{3x_1+2x_2+\abs{x_1-x_2}}), l =(0,0)^\T, u= (5,5)^\T$. DC components are
	\begin{equation*}
		g(x) = 5(x_1^2+x_2^2), \quad h(x)  = \sin(\sqrt{3x_1+2x_2+\abs{x_1-x_2}}) +5(x_1^2+x_2^2).			
	\end{equation*}
	\item \cite[Pr. 10.6]{ferrer2015solving} $n=2, f(x) := (x_1^2+ 0.09 x_1)(x_2^2+0.1 x_2), l =(-2,-2)^\T, u= (1,1)^\T$. DC components are
	\begin{equation*}
		g(x) = (x_1^2+0.09x_1)(x_2^2+0.1x_2) + 7.5 (x_1^2+x_2^2), \quad h(x)  = 7.5 (x_1^2+x_2^2).			
	\end{equation*}
	\item \cite[Pr. 10.7]{ferrer2015solving} $n=2, f(x) := \frac{1}{4}(x_1+x_2)^2-\frac{1}{4}(x_1-x_2)^2, l =(-2,-3)^\T, u= (3,4)^\T$. DC components are
	\begin{equation*}
		g(x) = \frac{1}{4}(x_1+x_2)^2, \quad h(x)  = \frac{1}{4}(x_1-x_2)^2.	
	\end{equation*}
	The problem attains an optimal solution at $(3,-3)^\T$ with minimum value -9. 
	\item \cite[Pr. 10.8]{ferrer2015solving} $n=2, f(x) := 0.03(x_1^2+x_2^2)-\cos(x_1)\cos(x_2), l =(-6,-5)^\T, u= (4,2)^\T$. DC components are
	\begin{equation*}
		g(x) = 1.03(x_1^2+x_2^2)-\cos(x_1)\cos(x_2), \quad h(x)  = (x_1^2+x_2^2). 			
	\end{equation*}
	The problem attains an optimal solution at $(0,0)^\T$ with minimum value -1. 
	\item \cite[Pr. 10.5]{ferrer2015solving} $n \in \{2,3\}$, $m \in \{2,3\}$, 
	$f(x) := -\sum_{i=1}^{m}\frac{1}{\norm{x-a_ie}^2+c_i}$, $l = 0 e, u = 10 e$, where $a=(4, 2.5, 7.5)^\T, c=(0.70, 0.73, 0.76)^\T$ are parameters of the problem. 
	DC components are
	\begin{equation*}
		g(x) = f(x)+\norm{x}^2, \quad h(x)  = \norm{x}^2.	
	\end{equation*}
\item \cite[Pr. 10.9]{ferrer2015solving} $n=4$, $ l = -10 e, u=10 e$, $f(x) = g(x)-h(x)$ with \begin{align}
	g(x) &:= \abs{x_1-1} + 200 \max\{0,\abs{x_{1}} - x_2\} + 180 \max\{0,\abs{x_{2}} - x_3\} + \abs{x_1-1} \notag \\ & \quad \quad + 10.1 (\abs{x_2-1}+\abs{x_4-1}) + 4.95 \abs{x_2+x_4-2}, \notag\\ 
	h(x)&:= 100 (\abs{x_{1}} - x_2) + 90 (\abs{x_{3}} - x_4) + 4.95 \abs{x_2-x_4}. \notag
	\end{align} 
\item \cite[Pr. 10.10]{ferrer2015solving} $n \in \{2,3,4,5\}$, $l = -10 e, u=10 e$, $f(x) = g(x)-h(x)$ with \begin{align}
	g(x) &:= \abs{x_1-1} + 200 \sum_{i=2}^n \max\{0,\abs{x_{i-1}} - x_i\}, \quad
	h(x) := 100 \sum_{i=2}^n (\abs{x_{i-1}} - x_i). \notag 
\end{align}		
\end{enumerate}

Example 1 has a univariate objective function ($n=1$); Examples 2-5 have bivariate objective functions ($n=2$); and Example 6 is scalable and is solved for $n \in \{2,3\}$. Examples 7-8 are polyhedral DC programming instances where $n=4$, and $n \in \{2,3,4,5\}$, respectively. 
Epigraphs of $\epsilon$-polyhedral approximations of $g$ returned by Algorithms \ref{alg_1}-\ref{alg_3} for Example 3 are shown in \Cref{fig:ex:alg1} for illustrative purposes. As expected, when the algorithms are run for the same $\epsilon$ value, \Cref{alg_1_mod} returns a much finer approximation of $g$ compared to the others. Moreover, \Cref{alg_3} returns an underestimator of $g$ which approximates $g$ locally around the optimal solution, as expected.  

\begin{figure}[H]
	\centering
	\subfloat[\Cref{alg_1}, $\epsilon=1$]{\includegraphics[width=2in]{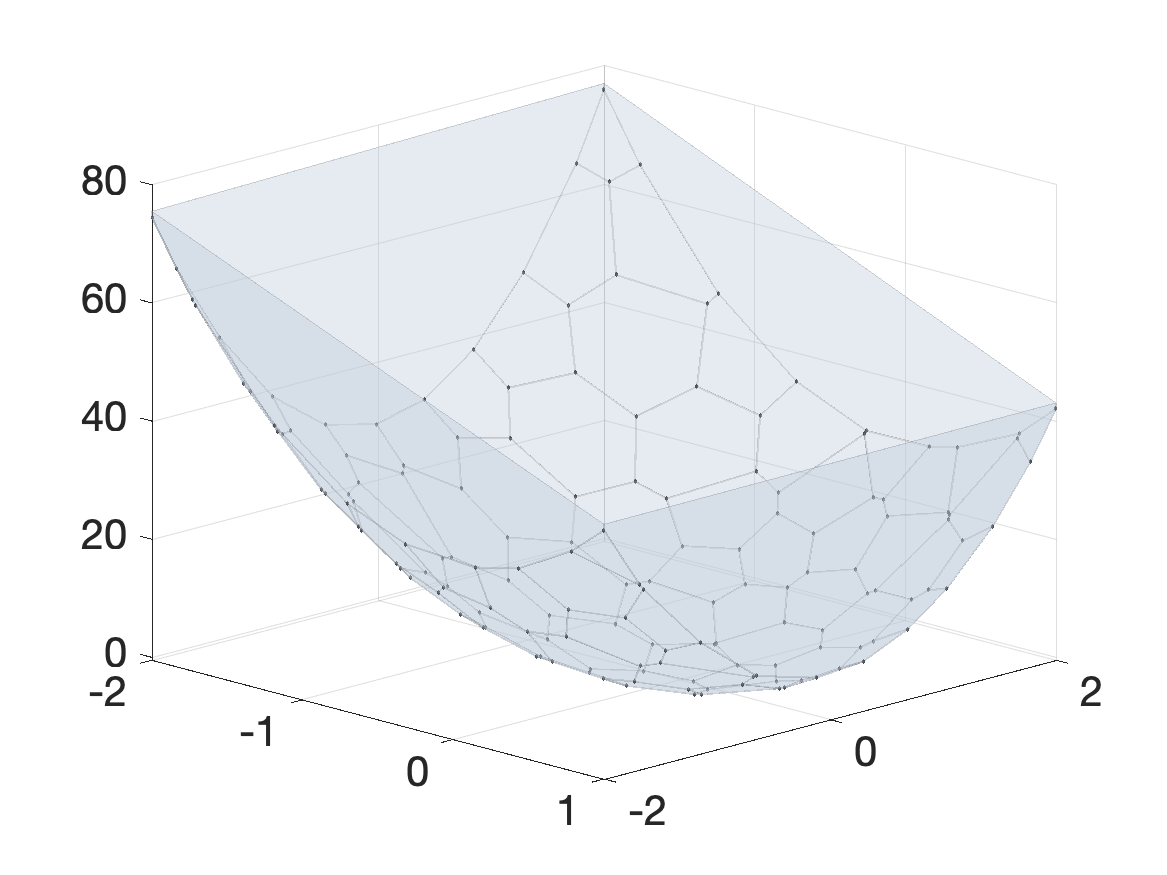}} 
	\subfloat[\Cref{alg_1}, $\epsilon=0.1$]{\includegraphics[width=2in]{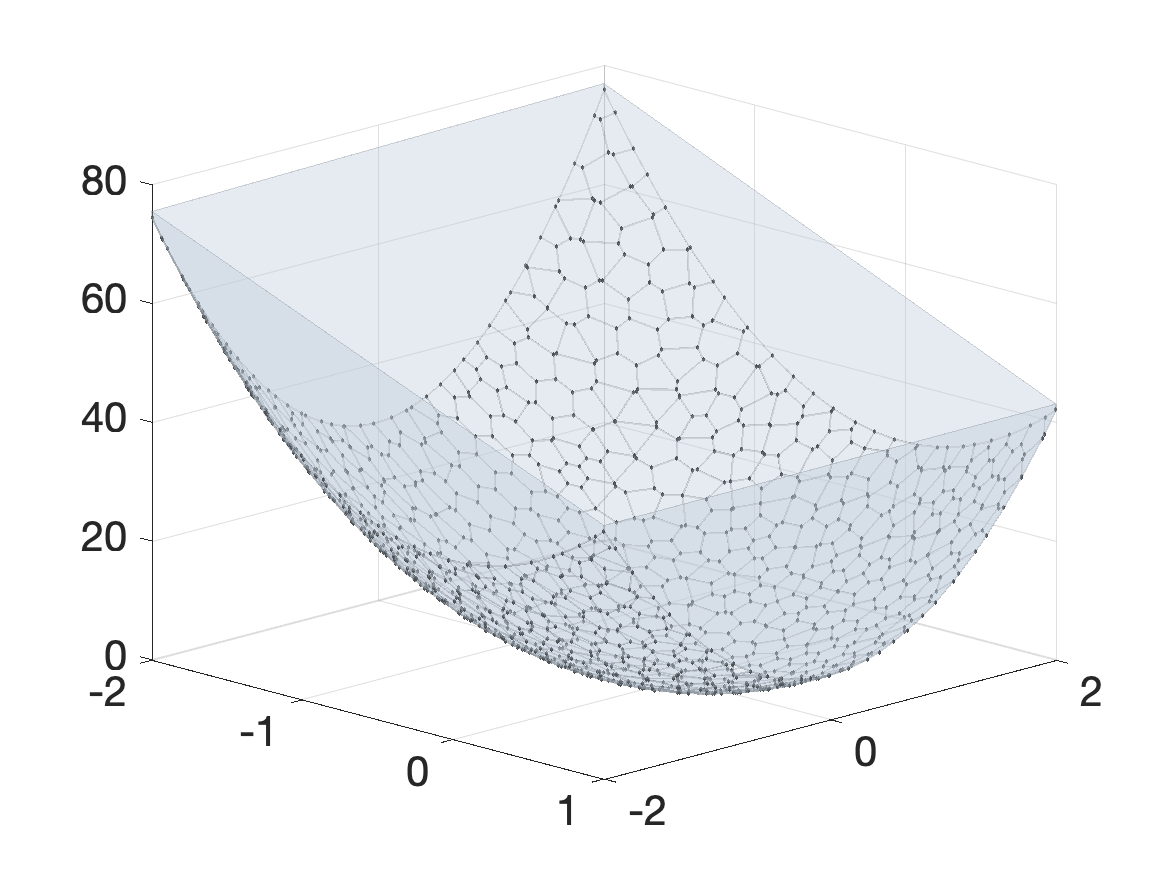}}
	\subfloat[\Cref{alg_1}, $\epsilon=0.1$]{\includegraphics[width=2in]{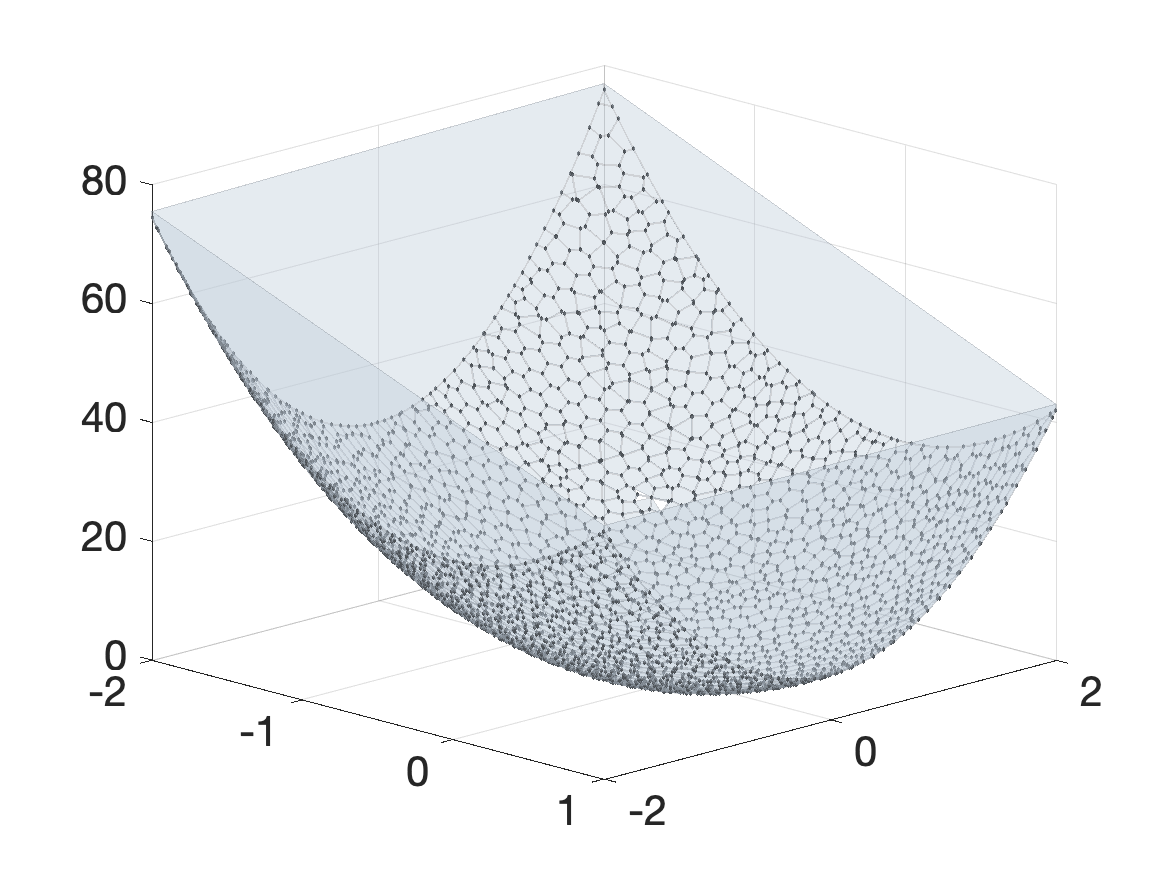}}
	\\
	\subfloat[\Cref{alg_1_mod}, $\epsilon=1$]{\includegraphics[width=2in]{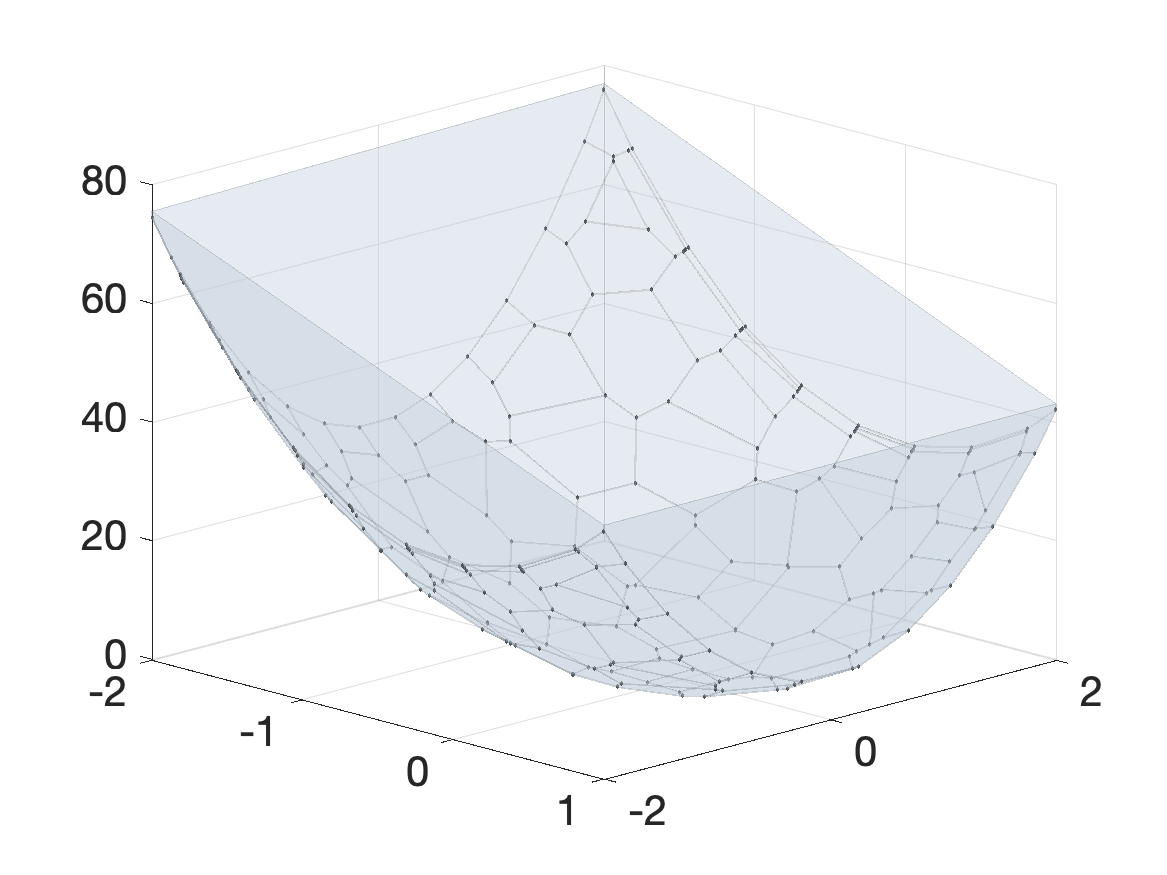}} 
	\subfloat[\Cref{alg_1_mod}, $\epsilon=0.1$]{\includegraphics[width=2in]{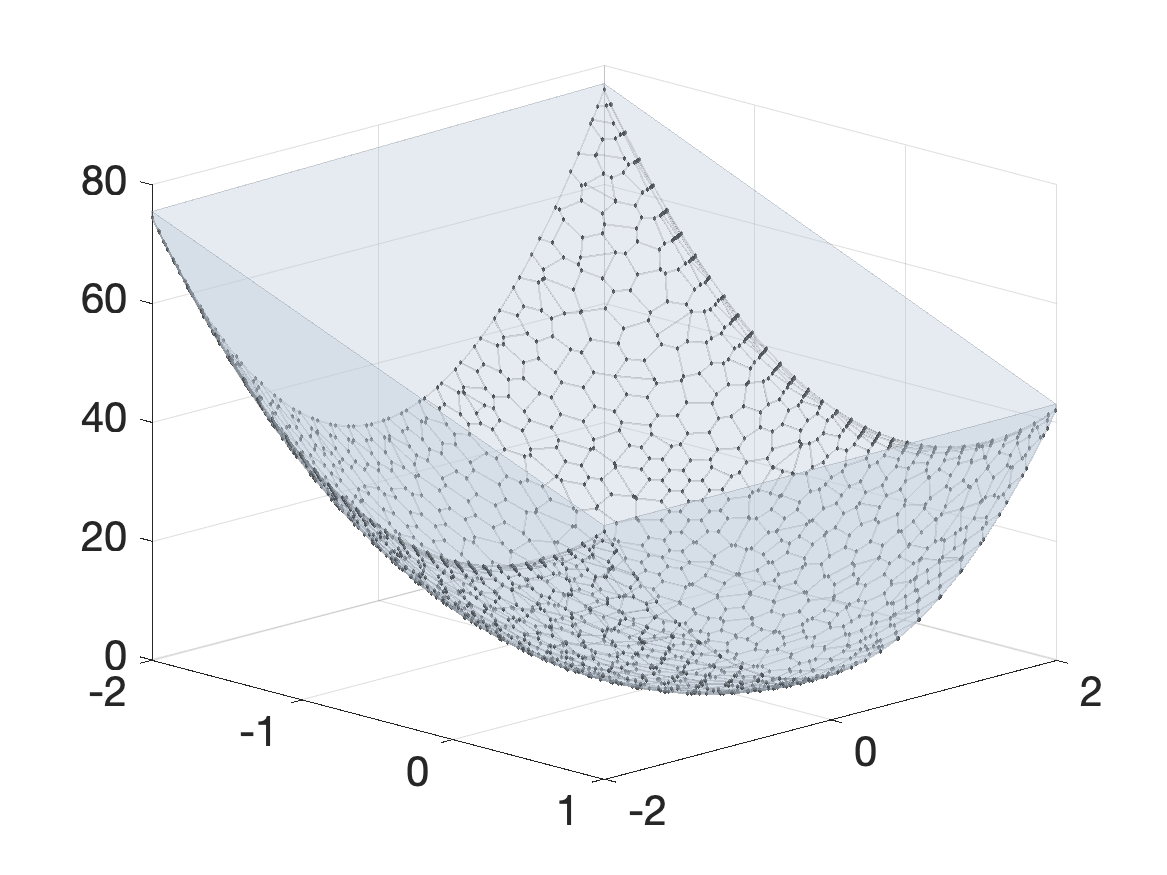}}
	\subfloat[\Cref{alg_1_mod}, $\epsilon=0.1$]{\includegraphics[width=2in]{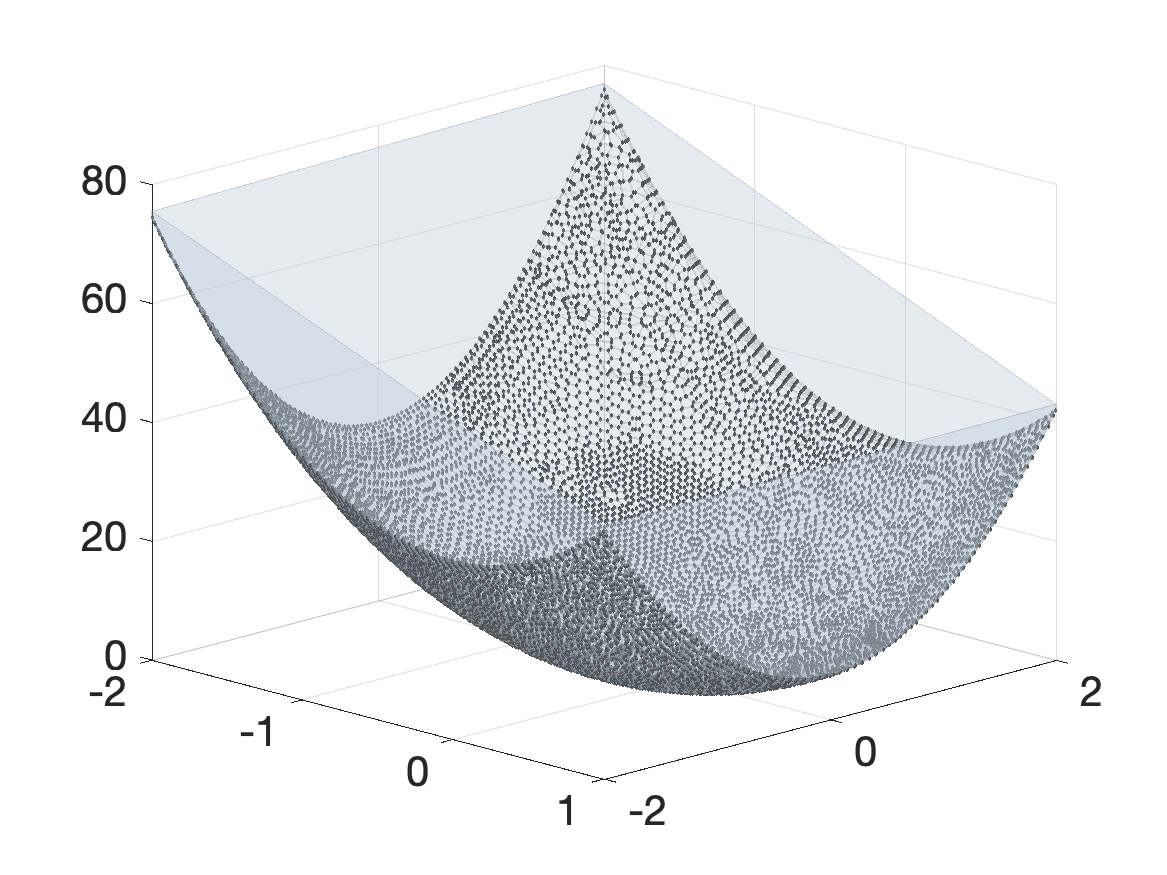}}
	\\
	\subfloat[\Cref{alg_3}, $\epsilon=1$]{\includegraphics[width=2in]{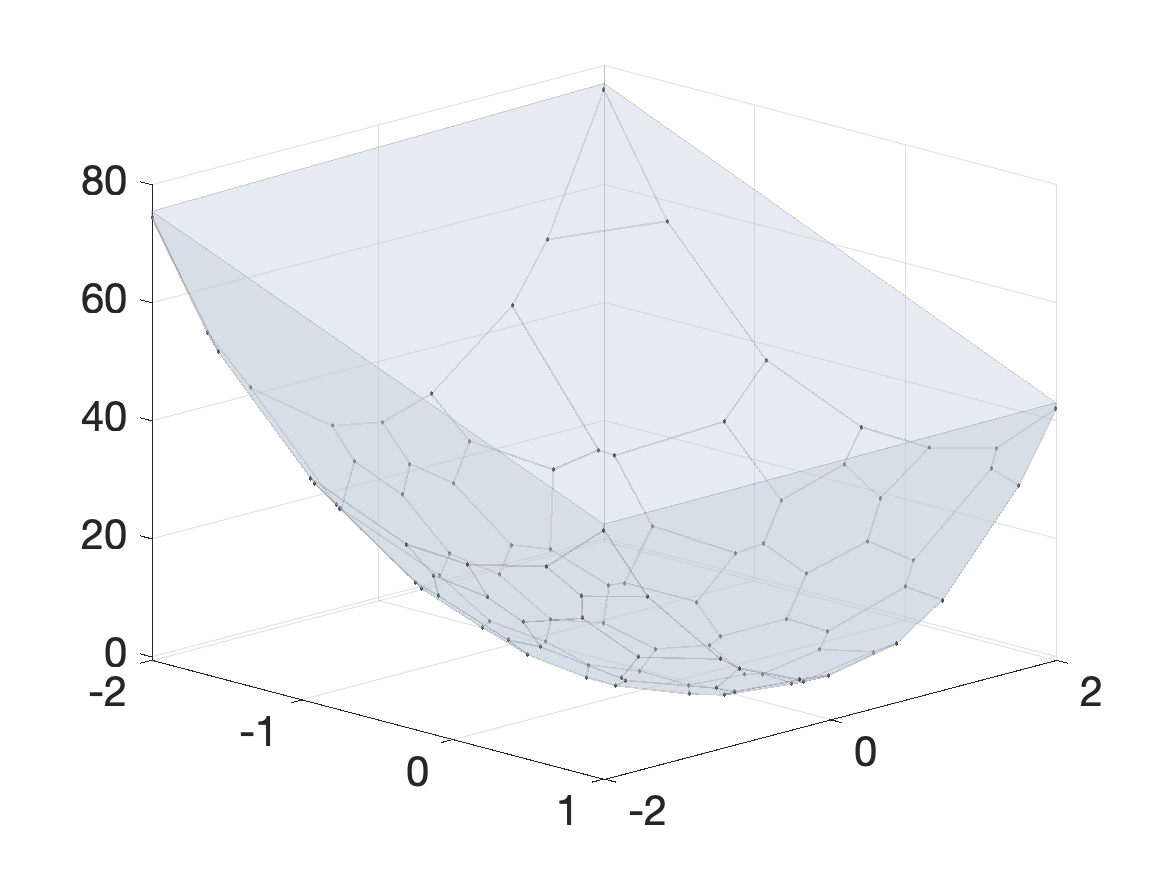}} 
	\subfloat[\Cref{alg_3}, $\epsilon=0.1$]{\includegraphics[width=2in]{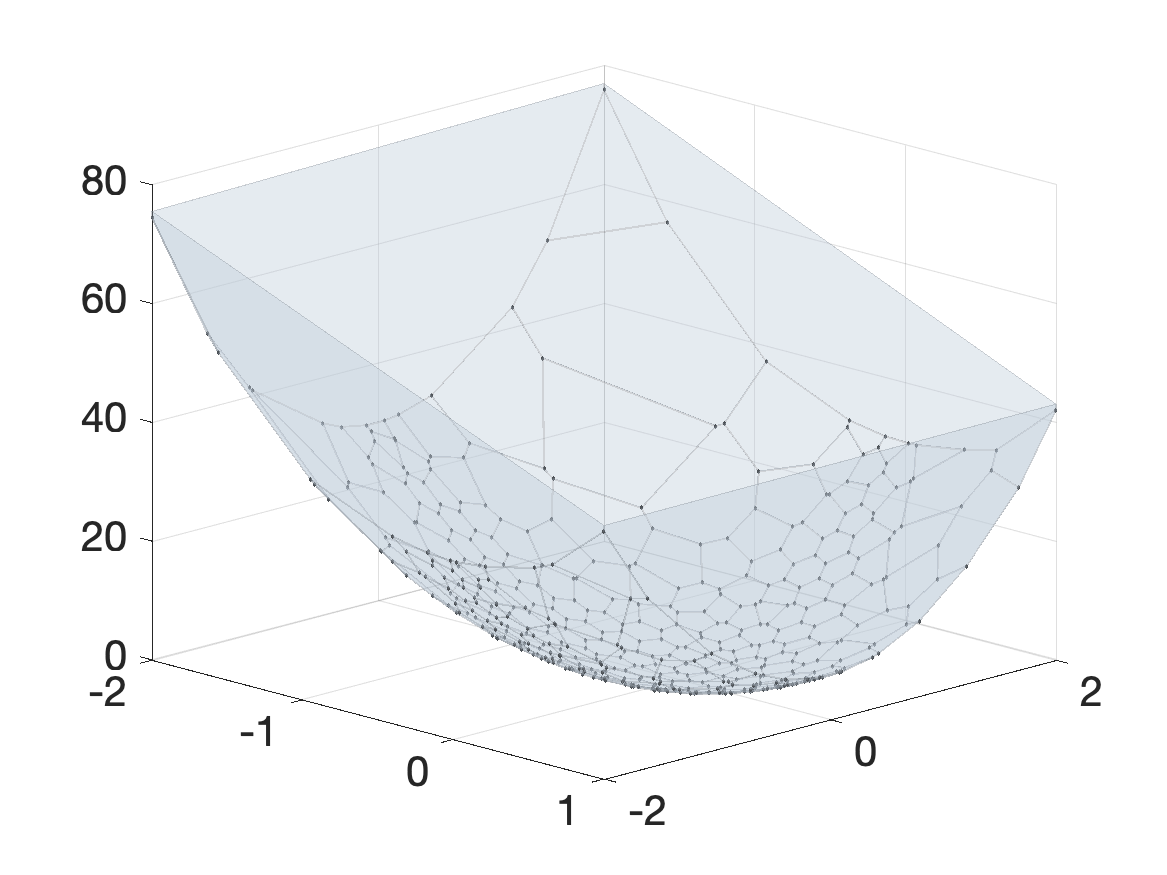}}
	\subfloat[\Cref{alg_3}, $\epsilon=0.1$]{\includegraphics[width=2in]{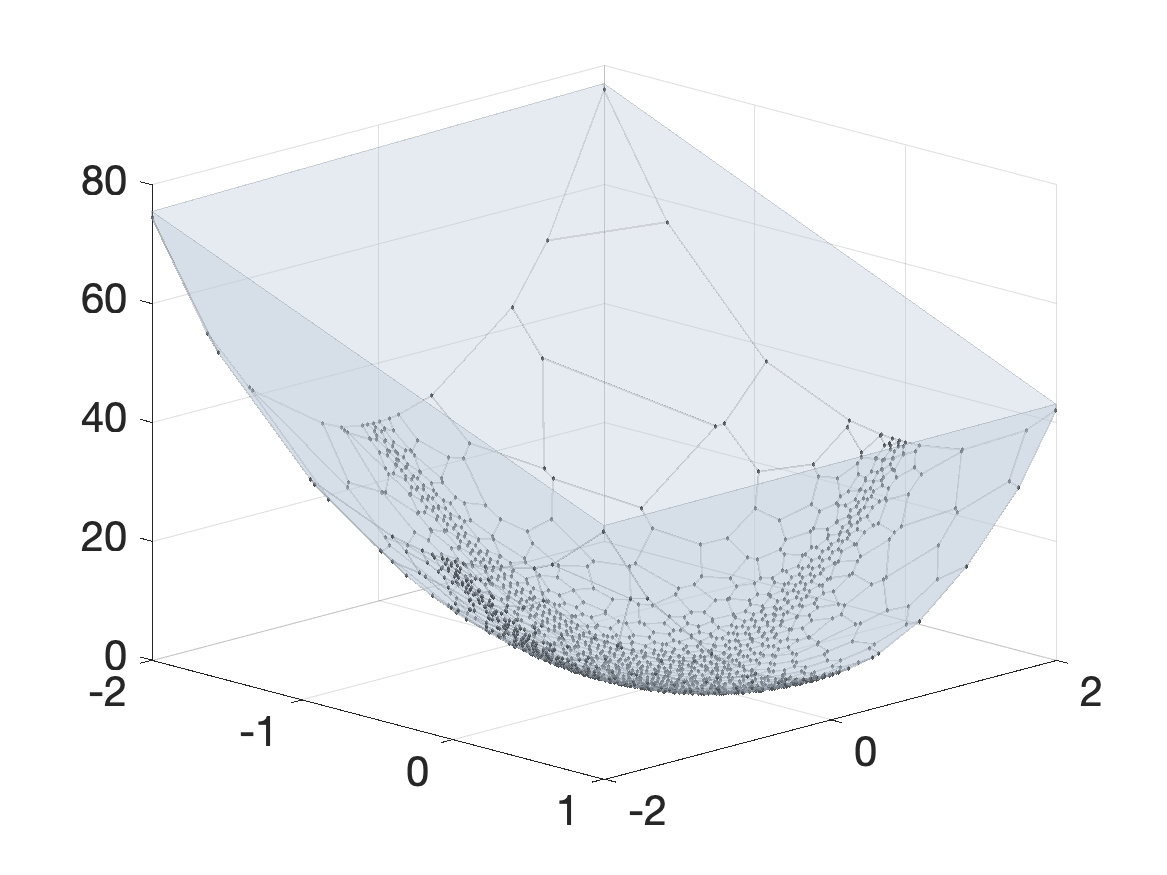}}
	\caption{The polyhedral approximations of $g$ obtained by the algorithms upon termination for Example 3. }
	\label{fig:ex:alg1}
\end{figure}

We solve all examples by Algorithms \ref{alg_1}-\ref{alg_3} for a few different $\epsilon$ values. Tables \ref{tab:1} and \ref{tab:2} show the computational results. In particular, for each example and algorithm, they show the CPU time (time) and the objective function value (value) obtained by the corresponding algorithm. We also write the optimal objective value $z^\ast$ if it is known. For each example, we set a time limit of one hour. If the algorithm hits the time limit, it stops and returns the current solution, which may not be an $\epsilon$-solution. This is indicated by `$> 3600$' in the time column of the tables. 
 
Note that DCECAM proposed in \cite{ferrer2015solving} solves DC programming problems if the first component of the DC function is Lipschitz continuous, and it returns a sequence of solutions that converge to a global optimal solution. However, when stopped using a tolerance $\epsilon > 0$, it doesn't guarantee to return an $\epsilon$-solution in the sense of \Cref{defn:eps}. For each example, we also provide the results regarding DCECAM from \cite{ferrer2015solving} if available.\footnote{There are also some results in \cite[Table 7]{ferrer2015solving} for Example 6. However, there are different choices of parameters for this set of examples and the corresponding table from \cite{ferrer2015solving} does not provide the selected parameter. For the other examples, the values returned by DCECAM are taken as they appear in \cite{ferrer2015solving}. The value returned by DCECAM for Example 3 is smaller than the optimal objective function value as appeared in \cite{ferrer2015solving}, hence not reported here.}

\begin{table}[htbp]  
	\centering  
	\caption{Computational results for Examples 1-6 }
	\resizebox{0.9\textwidth}{!}{ \begin{tabular}{|c|r|r|c|rr|rr|rr|rr|}    \hline   \multicolumn{1}{|c|}{\multirow{2}[2]{*}{Ex}} & \multicolumn{1}{c|}{\multirow{2}[2]{*}{$n$}} & \multicolumn{1}{c|}{\multirow{2}[2]{*}{$z^\ast$}} &       & \multicolumn{2}{c|}{Alg 1} & \multicolumn{2}{c|}{Alg 2} & \multicolumn{2}{c|}{Alg 3} & \multicolumn{2}{c|}{DCECAM} \\          &       &       & eps   & \multicolumn{1}{c}{time} & \multicolumn{1}{c|}{value} & \multicolumn{1}{c}{time} & \multicolumn{1}{c|}{value} & \multicolumn{1}{c}{time} & \multicolumn{1}{c|}{value} & \multicolumn{1}{c}{time} & \multicolumn{1}{c|}{value} \\    \hline   \multicolumn{1}{|c|}{\multirow{3}[2]{*}{1}} & \multicolumn{1}{c|}{\multirow{3}[2]{*}{1}} & \multicolumn{1}{c|}{\multirow{3}[2]{*}{-1-log 3 }} & 1     & 0.0156 & -2.0986 & 0.0156 & -2.0986 & 0.0156 & -2.0986 & \multirow{3}[2]{*}{0.3100} & \multirow{3}[2]{*}{-2.0927} \\          &       &       & 0.1   & 0.0781 & -2.0986 & 0.0156 & -2.0986 & 0.0781 & -2.0986 &       &  \\          &       &    & 0.01  & 0.2188 & -2.0986 & 0.0313 & -2.0986 & 0.0156 & -2.0986 &       &  \\    \hline    \multicolumn{1}{|c|}{\multirow{3}[2]{*}{2}} & \multicolumn{1}{c|}{\multirow{3}[2]{*}{2}} & \multicolumn{1}{c|}{\multirow{3}[2]{*}{-1}} & 1     & 2.3594 & -0.9602 & 0.1094 & -0.9602 & 0.7969 & -0.9602 & \multirow{3}[2]{*}{2.6100} & \multirow{3}[2]{*}{1} \\          &       &       & 0.1   & 529.5312 & -0.9999 & 4.4375 & -0.9999 & 3.2031 & -0.9999 &       &  \\          &       &       & 0.01  & $>$ 3600 & -0.9999 & 516.7031 & -1 & 11.9219 & -1 &       &  \\    \hline    \multicolumn{1}{|c|}{\multirow{3}[2]{*}{3}} & \multicolumn{1}{c|}{\multirow{3}[2]{*}{2}} & \multicolumn{1}{c|}{\multirow{3}[2]{*}{-0.00955}} & 1     & 1.6094 & 0.0002 & 0.1875 & 0.0002 & 0.5625 & 0.0099 & \multirow{3}[2]{*}{-} & \multirow{3}[2]{*}{-} \\          &       &       & 0.1   & 178.6094 & -0.0057 & 2.7344 & 0.0003 & 15.6406 & -0.0075 &       &  \\          &       &       & 0.01  & $>$ 3600 & -0.0092 & 460.9688 & -0.0073 & 335.0156 & -0.0091 &       &  \\    \hline    \multicolumn{1}{|c|}{\multirow{3}[2]{*}{4}} & \multicolumn{1}{c|}{\multirow{3}[2]{*}{2}} & \multicolumn{1}{c|}{\multirow{3}[2]{*}{-9}} & 1     & 0.0313 & -9.0000 & 0.0494 & -9 & 0.0313 & -9 & \multirow{3}[2]{*}{0.0500} & \multirow{3}[2]{*}{-9} \\          &       &       & 0.1   & 0.1719 & -9 & 0.0313 & -9 & 0.0313 & -9 &       &  \\          &       &       & 0.01  & 0.2969 & -9 & 0.1250 & -9 & 0.0313 & -9 &       &  \\    \hline    \multicolumn{1}{|c|}{\multirow{3}[1]{*}{5}} & \multicolumn{1}{c|}{\multirow{3}[1]{*}{2}} & \multicolumn{1}{c|}{\multirow{3}[1]{*}{-1}} & 1     & 0.9375 & -0.8659 & 0.0625 & -0.9988 & 0.3594 & -0.9918 & \multirow{3}[1]{*}{1.1400} & \multirow{3}[1]{*}{-0.9995} \\          &       &       & 0.1   & 77.4844 & -0.9979 & 1.5156 & -0.9994 & 0.6719 & -0.9932 &       &  \\          &       &       & 0.01  & $>$ 3600 & -0.9987 & 113.1719 & -0.9995 & 0.9219 & -0.9989 &       &  \\   \hline \multicolumn{1}{|r|}{\multirow{3}[1]{*}{6 (m=2)}} & \multicolumn{1}{r|}{\multirow{6}[3]{*}{2}} & \multirow{3}[1]{*}{} & 1     & 1.5625 & -1.4893 & 0.2188 & -1.3074 & 0.7031 & -1.3260 & \multirow{3}[1]{*}{-} & \multirow{3}[1]{*}{-} \\          &       &       & 0.1   & 167.1406 & -1.6220 & 4.5469 & -1.6151 & 1.0938 & -1.6116 &       &  \\          &       &       & 0.01  & $>$ 3600 & -1.6185 & 3349.6000 & -1.6214 & 2.0625 & -1.6196 &       &  \\\cline{1-1}\cline{3-12}    \multicolumn{1}{|r|}{\multirow{3}[2]{*}{6 (m=3)}} &       & \multirow{3}[2]{*}{} & 1     & 1.8125 & -1.5382 & 0.3281 & -1.3544 & 1.1563 & -1.3431 & \multirow{3}[2]{*}{-} & \multirow{3}[2]{*}{-} \\          &       &       & 0.1   & 183.5156 & -1.6486 & 4.5625 & -1.6576 & 1.0469 & -1.6509 &       &  \\          &       &       & 0.01  & $>$ 3600 & -1.6613 & 3448.7000 & -1.6616 & 1.5781 & -1.6605 &       &  \\    \hline    \multicolumn{1}{|r|}{\multirow{3}[2]{*}{6 (m=2)}} & \multicolumn{1}{r|}{\multirow{6}[4]{*}{3}} & \multirow{3}[2]{*}{} & 1     & 3191.5000 & -1.4111 & 164.8594 & -1.5008 & 133.6094 & -1.3882 & \multirow{3}[2]{*}{-} & \multirow{3}[2]{*}{-} \\          &       &       & 0.1   & $>$ 3600 & -1.4571 & -     & -     & 663.9531 & -1.5442 &       &  \\          &       &       & 0.01  & $>$ 3600 & -1.4571 & -     & -     & 835.0781 & -1.5616 &       &  \\
\cline{1-1}\cline{3-12}    \multicolumn{1}{|r|}{\multirow{3}[2]{*}{6 (m=3)}} &       & \multirow{3}[2]{*}{} & 1     & 3041.9000 & -1.4884 & 153.8281 & -1.5133 & 230.5625 & -1.2927 & \multirow{3}[2]{*}{-} & \multirow{3}[2]{*}{-} \\          &       &       & 0.1   & $>$ 3600 & -1.4884 & -     & -     & 761.8438 & -1.5877 &       &  \\          &       &       & 0.01  & $>$ 3600 & -1.4884 & -     & -     & 889.0781 & -1.5878 &       &  \\    \hline   \end{tabular}}%
\label{tab:1}%
\end{table}%

From \Cref{tab:1}, we observe for each algorithm that the runtime increases for decreased values of $\epsilon$, as expected. Moreover, for Examples 1-6, \Cref{alg_3} performs faster than the others in all cases and the difference is more significant for most of the examples for decreased values of $\epsilon$. When we compare the runtimes of Algorithms \ref{alg_1} and \ref{alg_1_mod}, we see that \Cref{alg_1_mod} excels \Cref{alg_1} in most cases. The only exception is Example 6 with $n=3$ and $\epsilon \in \{0.1,0.01\}$, in which none of the two algorithms stop based on the original stopping criteria. \Cref{alg_1} returns a solution after hitting the runtime limit, whereas \Cref{alg_1_mod} does not return a solution even after the runtime limit since \emph{bensolve tools} crushes while intersecting more than $40000$ halfspaces in one iteration. We see that \Cref{alg_3} is comparable to the DCECAM based on the available results in terms of the runtimes and the returned values for Examples 1-6.   

When we compare the objective function values returned by each algorithm for different values of $\epsilon$ in \Cref{tab:1}, we see that for some examples (for instance Ex 1, 2, and 4) Algorithms \ref{alg_1} and \ref{alg_1_mod} require much higher runtimes for smaller $\epsilon$ values even though the objective function value does not improve (much). The reason is Algorithms \ref{alg_1} and \ref{alg_1_mod} run until finding an $\epsilon$-polyhedral underestimator of $g$ without checking any optimality condition for \eqref{P}. This clearly is not the case for \Cref{alg_3}.  

\begin{table}[htbp]  
	\centering  
	\caption{Computational results for Examples 7-8: Algorithms 1-3 are run with $\epsilon = 1$ and return $x^\ast = e \in \R^n$ and the optimal objective function value $z^\ast = 0$ for each example and $n$.}    
	\resizebox{0.5\textwidth}{!}{
	\begin{tabular}{|c|r|r|r|r|rr|}    \hline    \multirow{2}[2]{*}{Ex} & \multicolumn{1}{r|}{\multirow{2}[2]{*}{$n$}} & \multicolumn{1}{c|}{Alg 1} & \multicolumn{1}{c|}{Alg 2} & \multicolumn{1}{c|}{Alg 3} & \multicolumn{2}{c|}{DCECAM} \\          &       & \multicolumn{1}{c|}{time} & \multicolumn{1}{c|}{time} & \multicolumn{1}{c|}{time} & \multicolumn{1}{c}{time} & \multicolumn{1}{c|}{value} \\    \hline    7     & 4     & 231.1094 & 6.9531 & 7.4063 & 9.7100 & 0.0024 \\    \hline    \multirow{4}[2]{*}{8} & 2     & 0.0313 & 0.0313 & 0.0313 & 0.21  & 0 \\          & 3     & 0.2656 & 0.0469 & 0.1094 & 3.57  & 3.572103 \\          & 4     &    9.4844   & 0.6406 & 2.5313 & 2.74  & 0.553429 \\          & 5     &    445.4844   & 21.5469 & 195.1563 & 345.12 & 1.500652 \\    \hline    
\end{tabular}} 
	\label{tab:2}
\end{table}

In \Cref{tab:2}, the $\epsilon$ values as well as the objective function values returned by Algorithms \ref{alg_1}-\ref{alg_3} are not reported. We run all three algorithms for $\epsilon \in \{1,0.1,0.01\}$ as in the previous set of examples. However, for Examples 6 and 7, all three algorithms return the optimal solution $x^\ast = e \in \R^n$ with optimal objective function value zero when run under all $\epsilon$ values. Moreover, the runtimes do not increase by the decreased values of $\epsilon$ for these examples. The reason may be the fact that function $g$ is polyhedral convex, hence can be computed exactly. In \Cref{tab:2}, we only report the results for $\epsilon = 1$.

For Examples 6 and 7, \Cref{alg_1_mod} outperforms the others in terms of the runtime and the difference is notable, especially for $n=5$. On the other hand, \Cref{alg_3} is significantly faster than \Cref{alg_1} and it is also faster than DCECAM in all instances. A main difference between DCECAM and the proposed algorithms is seen in the returned objective function values as the optimality gap returned by DCECAM is quite high for some instances, see for instance, Example 7 with $n=3$.    

Overall, we observe that \Cref{alg_3} has consistently better performance than DCECAM based on the available instances and data from \cite{ferrer2015solving}. On the other hand, for the polyhedral DC instances tested for this study, \Cref{alg_1_mod} performs better than \Cref{alg_3}. However, it is significantly worse than \Cref{alg_3} in other (non-polyhedral) instances especially as $\epsilon$ decreases. We also observe that intersecting the current approximation with more halfspaces (\Cref{alg_1_mod}) than a single halfspace (\Cref{alg_1}) in a single iteration when finding a polyhedral $\epsilon$-underestimator of a (non-polyhedral) convex function reduces the computational time, significantly. It is also worth noting that if the number of halfspaces to intersect at once is significantly high (more than 40000 in our test instances), then there is a risk of encountering technical/numerical issues in \emph{bensolve tools}. In that sense, \Cref{alg_1} still has an advantage compared to \Cref{alg_1_mod}.

\section{Conclusion} \label{conc}
In this paper, we consider DC programming problems and propose global approximation algorithms. First, we propose two algorithms to approximate a convex function over a box by iteratively generating a polyhedral underestimator of it via its affine minorants. Then, the polyhedral underestimator of the first convex component of a DC function obtained by these algorithms is used to solve the corresponding DC programming problem. We prove that both algorithms work correctly. We establish the convergence rate of \Cref{alg_1} and prove the finiteness of \Cref{alg_1_mod}. 

We propose another algorithm (\Cref{alg_3}) which also iteratively generates polyhedral underestimators of the first component $g$ of the DC function. Different from the others, it keeps updating the polyhedral underestimator of $g$ locally while searching for an $\epsilon$-solution of the DC programming problem directly. We prove the correctness and finiteness of \Cref{alg_3}. Moreover, we show that the sequence $\{x^k\}_{k \geq 0}$, outputted by \Cref{alg_3} converges to a global minimizer of the DC programming problem. Computational results show the satisfactory behavior of our proposed algorithms. 

As a future research direction, the convergence rate of \Cref{alg_1_mod} could be established using similar means used in the case of \Cref{alg_1}. The challenge is that, as per the definition of $H$-sequence of outer approximating polytopes, only a single halfspace is intersected at each iteration to update the current approximation. Hence, the results from \cite{lotov2004interactive} cannot be applied directly. 

Finally, the method for solving polyhedral DC programs for DC functions with the \emph{second} component being polyhedral convex, from \cite{lohne2017solving}, could be integrated with our approach to propose further approximation algorithms to solve DC programming problems globally. 

\section*{Declarations}
This manuscript has no associated data. 

\bibliographystyle{plain}
\bibliography{ref}

\begin{thebibliography}{10}

\bibitem{le2018dc}
L.~T.~H. An and P.~D. Tao.
\newblock {DC} programming and {DCA}: thirty years of developments.
\newblock {\em Mathematical Programming}, 169(1):5--68, 2018.

\bibitem{le2023open}
L.~T.~H. An and P.~D. Tao.
\newblock Open issues and recent advances in {DC} programming and {DCA}.
\newblock {\em Journal of Global Optimization}, pages 1--58, 2023.

\bibitem{aragon2020boosted}
A.~F.~J. Arag\'{o}n and P.~T. Vuong.
\newblock The boosted difference of convex functions algorithm for nonsmooth
  functions.
\newblock {\em SIAM Journal on Optimization}, 30(1):980--1006, 2020.

\bibitem{aragon2018accelerating}
F.~J.~A. Arag{\'o}n, R.~M.~T. Fleming, and P.~T. Vuong.
\newblock Accelerating the {DC} algorithm for smooth functions.
\newblock {\em Mathematical programming}, 169:95--118, 2018.

\bibitem{ararat2023convergence}
{\c{C}}.~Ararat, F.~Ulus, and M.~Umer.
\newblock Convergence analysis of a norm minimization-based convex vector
  optimization algorithm.
\newblock {\em arXiv preprint arXiv:2302.08723}, 2023.

\bibitem{beliakov2005review}
G.~Beliakov.
\newblock A review of applications of the cutting angle methods.
\newblock {\em Continuous Optimization}, pages 209--248, 2005.

\bibitem{bertsekasBook}
D.~P. Bertsekas.
\newblock {\em Nonlinear programming}.
\newblock Athena Scientific, 1999.

\bibitem{bertsekas2011unifying}
D.~P. Bertsekas and H.~Yu.
\newblock A unifying polyhedral approximation framework for convex
  optimization.
\newblock {\em SIAM Journal on Optimization}, 21(1):333--360, 2011.

\bibitem{ciripoi2018vector}
D.~Ciripoi, A.~L{\"o}hne, and B.~Wei{\ss}ing.
\newblock A vector linear programming approach for certain global optimization
  problems.
\newblock {\em Journal of Global Optimization}, 72:347--372, 2018.

\bibitem{ferrer2015solving}
A.~Ferrer, A.~Bagirov, and G.~Beliakov.
\newblock Solving {DC} programs using the cutting angle method.
\newblock {\em Journal of Global Optimization}, 61(1):71--89, 2015.

\bibitem{goffin2002convex}
J.-L. Goffin and J.-P. Vial.
\newblock Convex nondifferentiable optimization: A survey focused on the
  analytic center cutting plane method.
\newblock {\em Optimization Methods and Software}, 17(5):805--867, 2002.

\bibitem{gotoh2018dc}
J.~Gotoh, A.~Takeda, and K.~Tono.
\newblock {DC} formulations and algorithms for sparse optimization problems.
\newblock {\em Mathematical Programming}, 169:141--176, 2018.

\bibitem{horst1999dc}
R.~Horst and N.~V. Thoai.
\newblock {DC} programming: overview.
\newblock {\em Journal of Optimization Theory and Applications}, 103(1):1--43,
  1999.

\bibitem{kelley1960cutting}
J.~E. Kelley.
\newblock The cutting-plane method for solving convex programs.
\newblock {\em Journal of the society for Industrial and Applied Mathematics},
  8(4):703--712, 1960.

\bibitem{keskin2023outer}
\.{I}.~N. Keskin and F.~Ulus.
\newblock Outer approximation algorithms for convex vector optimization
  problems.
\newblock {\em Optimization Methods and Software}, pages 1--33, 2023.

\bibitem{lohne2017solving}
A.~L{\"o}hne and A.~Wagner.
\newblock Solving {DC} programs with a polyhedral component utilizing a
  multiple objective linear programming solver.
\newblock {\em Journal of Global Optimization}, 69(2):369--385, 2017.

\bibitem{lohne2015bensolve}
A.~L{\"o}hne and B.~Wei{\ss}ing.
\newblock Bensolve-vlp solver, version 2.0. 1.
\newblock {\em URL http://bensolve. org}, 2015.

\bibitem{lohne2016equivalence}
A.~L{\"o}hne and B.~Wei{\ss}ing.
\newblock Equivalence between polyhedral projection, multiple objective linear
  programming and vector linear programming.
\newblock {\em Mathematical Methods of Operations Research}, 84:411--426, 2016.

\bibitem{lohne2017vector}
A.~L{\"o}hne and B.~Wei{\ss}ing.
\newblock The vector linear program solver bensolve--notes on theoretical
  background.
\newblock {\em European Journal of Operational Research}, 260(3):807--813,
  2017.

\bibitem{lotov2004interactive}
A.~V. Lotov, V.~A. Bushenkov, and G.~K. Kamenev.
\newblock {\em Interactive Decision Maps: Approximation and Visualization of
  Pareto Frontier}, volume~89.
\newblock Springer, 2004.

\bibitem{lu2019nonmonotone}
Z.~Lu and Z.~Zhou.
\newblock Nonmonotone enhanced proximal dc algorithms for a class of structured
  nonsmooth dc programming.
\newblock {\em SIAM Journal on Optimization}, 29(4):2725--2752, 2019.

\bibitem{rockafellar1997convex}
R.~T. Rockafellar.
\newblock {\em Convex Analysis}, volume~11.
\newblock Princeton {U}niversity {P}ress, 1997.

\bibitem{vom2020solving}
S.~vom Dahl and A.~L{\"o}hne.
\newblock Solving polyhedral {DC} optimization problems via concave
  minimization.
\newblock {\em Journal of Global Optimization}, 78(1):37--47, 2020.

\end{thebibliography}

\newpage
\end{document}